\newcommand{\bc}{\begin{center}}
\newcommand{\ec}{\end{center}}
\newcommand{\benum}{\begin{enumerate}}
\newcommand{\eenum}{\end{enumerate}}
\newcommand{\matl}{\left[ \begin{array}}
\newcommand{\matr}{\end{array} \right]}
\newcommand{\matls}{\left[ \begin{smallmatrix}}
\newcommand{\matrs}{\end{smallmatrix} \right]}
\newcommand{\isdef}{\stackrel{\triangle}{=}}
\newcommand{\rmT}{{\rm T}}
\newcommand{\BBR}{{\mathbb R}}
\newtheorem{theorem}{\indent Theorem}[section]
\newenvironment{thm}{\begin{theorem}$\!\!${\bf }\rm }{\end{theorem}}
\newtheorem{proposition}{\indent Proposition}[section]
\newenvironment{prop}{\begin{proposition}$\!\!${\bf }\rm }{\end{proposition}}
\newtheorem{lemma}{\indent Lemma}[section]
\newenvironment{lem}{\begin{lemma}$\!\!${\bf }\rm }{\end{lemma}}
\newtheorem{corollary}{\indent Corollary}[section]
\newenvironment{cor}{\begin{corollary}$\!\!${\bf }\rm }{\end{corollary}}
\newtheorem{definition}{\indent Definition}[section]
\newenvironment{defn}{\begin{definition}$\!\!${\bf }\rm }{\end{definition}}
\newtheorem{example}{\indent Example}[section]
\newtheorem{Fact}{\indent Fact}[section]
\newenvironment{fact}{\begin{Fact}$\!\!${\bf }\rm }{\end{Fact}}
\newtheorem{conjecture}{\indent Conjecture}[section]
\newenvironment{con}{\begin{conjecture}$\!\!${\bf }\rm }{\end{conjecture}}
\makeatletter \@addtoreset{equation}{section}
\title{Delayed Recursive State and Input Reconstruction\thanks{*
This work was supported by Department of Science and Technology through
projects SR/S3/MERC/0064/2012 and SERC/ET-0150/2012, and in part by IIT
Gandhinagar.}}
\author{Roshan A Chavan$^\dag$ and Harish J.
  Palanthandalam-Madapusi$^\ddag$\\ SysIDEA Lab\\ IIT Gandhinagar,\\
  Ahmedabad - 382424, India.\thanks{$\dag$ Graduate Student, Mechanical
    Engineering, System Identification, Data based Estimation and Analysis Lab, IIT
    Gandhinagar, Ahmedabad - 382424, India,
    roshan\_chavan@iitgn.ac.in. \par
    $\ddag$ Assistant Professor, Mechanical Engineering, System
    Identification, Data based Estimation and Analysis Lab, IIT Gandhinagar, Ahmedabad -
    382424, India, +91 79 3245 9899, harish@iitgn.ac.in.}}
\begin{document}
\maketitle \doublespacing
\begin{abstract}
  The unknown inputs in a dynamical system may represent unknown
  external drivers, input uncertainty, state uncertainty, or
  instrument faults and thus unknown-input reconstruction has several
  wide-spread applications.  In this paper we consider delayed
  recursive reconstruction of states and unknown inputs for both
  square and non-square systems. That is, we develop filters that use
  $current$ measurements to estimate $past$ states and reconstruct $past$
  inputs. We further derive necessary and sufficient conditions for
  convergence of filter estimates and show that these convergence
  properties are related to multivariable zeros of the system. With
  the help of illustrative examples we highlight the key contributions
  of this paper in relation with the existing literature. Finally, we
  also show that existing unbiased minimum-variance filters are
  special cases of the proposed filters and as a consequence the
  convergence results in this paper also apply to existing unbiased
  minimum-variance filters.
\end{abstract}

\section{Introduction}



Unknown inputs in a dynamical system may represent unknown external
drivers, input uncertainty, state uncertainty, or instrument
faults. Thus both reconstruction of unknown-inputs and estimation of states in the presence of unknown inputs, have numerous applications in
all fields of engineering. These are fundamental problems that have been of interest for the last several decades with a range of papers relating to state estimation and unknown-input reconstruction\cite{sain69,silver69,dorato69,moylan77,bhattacharyya78,kudva1980,miller82,fairman84,kitanidis,yang1988,darouach94,corless98,hou1998,shabaik03,kitanidis,sundaram,gmarroz2010,kirtikar2010}. While, both discrete-time and continuous-time versions of the problem received attention, in the discrete-time setting, the problem can be stated in its simplest form as the problem of estimating the state $x_k$ and/or the unknown inputs $e_k$ for linear systems of the form
\begin{align}
       x_{k+1} &= Ax_k+He_k, \label{1}\\
       y_k &= Cx_k \label{2},
\end{align}
using knowledge of the model equations and measurements of the outputs $y_k$ alone.

The early works in this area \cite{sain69,silver69,dorato69} approached this as a system inversion problem and focussed on observability conditions under which estimation of unknown inputs are possible. Subsequently, a number of papers over the next several years focussed on construction of observers for state estimation in the presence of unknown inputs \cite{moylan77,bhattacharyya78,kudva1980,miller82,fairman84,kitanidis,yang1988,darouach94,hou1998,shabaik03} with varying approaches.

More recently, interest has turned to reconstructing the unknown inputs in addition to estimation of the states \cite{corless98,xiong03,floquet2006,gmarroz2010,gmarrocdc2010,steven_kitanidis,palanthACC2006,palanthUMVACC2007}. Some work like \cite{kitanidis,palanthUMVACC2007} suggest that input reconstruction can be conceived as an added step after unbiased estimates of the states of the systems are obtained. However, both the state estimation literature and input reconstruction literature focussed on estimating the states or inputs at the immediate previous time step given output measurements until the current time step. Such an approach invariably led to an assumption that $CH$ has full column rank or a closely related assumption. This assumption ensured that all the unknown inputs at time step $k-1$ directly affected the outputs at time step $k$ (as is obvious from a  simple substitution in (\ref{1}) and (\ref{2})), and therefore was a necessary condition for being able to estimate $x_k$ and/or $e_{k-1}$ from $y_{k}$. This becomes a fairly restrictive assumption as there are large classes of systems in which the effect of all the unknown inputs may not be seen in the output in the immediate next time step but may be seen in subsequent time steps (when $CH$ does not have full column rank). Furthermore, convergence results for the filters developed re largely missing the literature.

Recent work on input and state observability \cite{kirtikar2010} question the need for this assumption and in fact conclude that in the case that $CH$ is not full column rank but other conditions are satisfied, it may be possible estimate inputs and states with a time shift (with a delay). That is, it may be possible to estimate $e_{k-r}$ or $x_{k-r}$ given measurements of $y_k$. However, \cite{kirtikar2010} does not provide a robust, recursive way to estimate these states and inputs. \cite{sundaram,amatoACC2013,palanthfitch,gmarroz2010} take advantage of this idea to explore recursive filter-based methods to estimate past (delayed) states and inputs based on measurements of current outputs. \cite{palanthfitch} represent some initial preliminary efforts in this direction, while \cite{amatoACC2013} develops a heuristic method for square systems (dimension of inputs are same as dimension of outputs). \cite{gmarroz2010} incorporates a reconstruction delay with the purpose of negating the effect of non-minimum-phase zeros on the reconstruction error. Note that it has been established in \cite{kirtikar2010} and other related works that non-minimum-phase invariant zeros in the system present a fundamental limitation in reconstruction of unknown inputs and states.

\cite{sundaram} develops a filter that uses a bank of measurements from current time ($k$) to a past time ($k-r$) to estimate the states at time instance $k-r$. While this paper is able to successfully relax the assumption that $CH$ must have full column rank and drawing connections with presence of invariant zeros in the system, it does not focus on input reconstruction and focusses on state estimation alone. Further, the results are not connected to observability results present int he literature.

\par
In this paper, we develop a novel but relatively simple class of filters that incorporates
a reconstruction delay in estimating the states and the unknown inputs of the system. This reconstruction delay allows us to relax the assumption of $CH$ having full column rank and thus are applicable to a larger class of systems. These
filters use measurements up to time step $k$ to reconstruct states and
inputs up to time step $k-r$, where $r$ is a non-negative integer (see
Figure \ref{fig:dir}). This is referred to as reconstruction of the
inputs with a delay of $r$ time steps and should not be confused with
the system dynamics having a delay. We further show that the reconstruction delay $r$ is not the choice of the user but rather dependent on the system and can be characterised in terms of the rank of matrices containing markov parameters.


For these new filters, we then
develop necessary and sufficient conditions for convergence and
investigate their relationship with the multivariable invariant zeros of the
system. We further establish conditions under which a stable filter exists and relate these conditions to system invertibility results.


\par
Thus we make three important contributions. First, we develop a
new class of filters that apply to a wide range of systems by incorporating
an appropriate delay in input reconstruction, with no restrictions on the nature
of input. Second, we develop sufficient conditions for convergence of the estimates
that is largely absent in the literature. By further establishing that several existing filters are special cases of the current filter, we are also indirectly proving convergence results for several filters int he literature. Third, we establish a connection between invertibility results in the literature with conditions for existence and convergence of the proposed filters. Since invertibility and observability literature and filtering literature have been mostly disparate thus far, this is a contribution that helps connect two sets of results in the literature.

\begin{figure}[ht!]
\centering
\includegraphics[width=8cm]{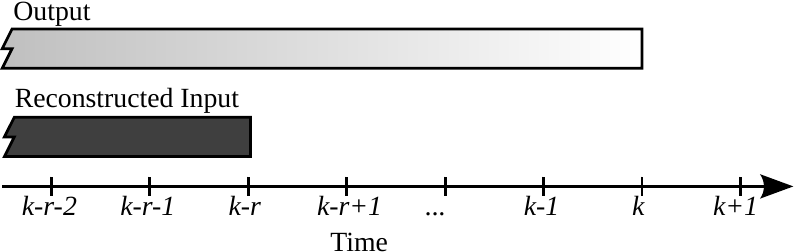}
\caption{Concept of input reconstruction with a delay.}
\label{fig:dir}
\end{figure}

We first start by introducing and developing the new filter in the following section.

\section{Unbiased Filter with General Delay}

Consider state estimation and input reconstruction with a
delay of $r$ time steps, that is, measurements up to time
step $k$ are used to estimate states and inputs at time step $k-r$.
Consider a state-space system
\begin{align}
       x_{k+1}= Ax_k+Bu_k+He_k+w_k, \label{x}
\end{align}
\begin{align} 
     y_k = Cx_k + Du_k+v_k, \label{y}
\end{align}
$x_k \in \BBR^n,\, u_k \in \BBR^m,\, y_k\in \BBR^l,\, e_k \in \BBR^p$, are the
state, known input, output measurement, unknown input vectors, respectively,
$w_k\in \BBR^n$ and $v_k \in \BBR^l$ are zero-mean, white process and
measurement noise,
respectively, and $A \in \BBR^{n \times n},\, B \in \BBR^{n \times m},\, H \in
\BBR^{n \times p},\, C \in \BBR^{l \times n},\,$ and $ D \in \BBR^{l \times
m}\,$. Note that $e_k$ is an arbitary unknown input and can represent either
deterministic or stochastic unknown signals. 
First we consider the simplifications $B = 0$ and $D = 0$. Note that the filter
derivation is independent of $B$ and $D$ matrices, and thus the assumption on
$B$ and $D$ matrices is for convenience alone. 
Without loss of generality, we assume $l \leq n$ also we assume $p<n$
and rank$(H)=p$. $l > n$ would imply the presence of redundant sensors.

For the state-space system (\ref{x}), (\ref{y}) (and with $B=0$ and $D=0$), we
consider a filter of the form
\begin{align}
\hat{x}_{k-r\mid k} = \hat{x}_{k-r\mid k-1}+L_{k}(y_{k}-C\hat{x}_{k\mid
k-1}), \label{xhat1r}
\end{align}
where
\begin{align}
     \hat{x}_{k-r\mid k-1} = A\hat{x}_{k-r-1\mid k-1},\;\hat{x}_{k\mid k-1} = A^{r+1}\hat{x}_{k-r-1\mid k-1}.\label{xhat3r} 
\end{align}

The unique feature of the above filter equations is that estimates are computed
with a delay of $r$ time steps. That is, $\hat{x}_{k-r\mid k}$ is the state
estimate at time step $k-r$ given output data (measurements) up to the
current time step $k$. Note that $\hat{x}_{k\mid k-1}$ is a $r+1$ step open
loop prediction based on the previous state estimate $\hat{x}_{k-r-1\mid k-1}$

Next, we define the state estimation error as
\begin{align}
     \varepsilon_{k-r} \triangleq x_{k-r} - \hat{x}_{k-r\mid k}, \label{see_r}
\end{align}
and the error covariance matrix as 
\begin{align}
P_{k-r\mid k} \triangleq \mathbb{E}[\varepsilon_{k-r}\varepsilon_{k-r}^T].
\label{ecmg}
\end{align}

\subsection{Necessary Conditions for Unbiasedness}
\begin{defn} \label{defn:unb}
The filter (\ref{xhat1r}) - (\ref{xhat3r}) is \emph{unbiased} if $\hat{x}_{k-r
\mid k}$ is an unbiased estimate of the state $x_{k-r}$.
\end{defn}
Definition \ref{defn:unb} implies that the filter (\ref{xhat1r})-(\ref{xhat3r})
is unbiased if and only if
\begin{align}
    \mathbb{E}[\varepsilon_{k-r}]=\mathbb{E}[x_{k-r} - \hat{x}_{k-r\mid k}]=0.
\label{ev1}
\end{align}
Next, we note that
\begin{align}
\varepsilon_{k-r}  = & x_{k-r} - \hat{x}_{k-r\mid k} \nonumber \\
 =  & A x_{k-r-1} + H e_{k-r-1} + w_{k-r-1} - \hat{x}_{k-r\mid k-1}  -
L_{k}(Cx_k+v_k - \nonumber \\ & C \hat{x}_{k\mid k-1}).
\nonumber \\
= &(A-L_{k}CA^{r+1})\varepsilon_{k-r-1}+ (H-L_{k}CA^rH)e_{k-r-1} \nonumber\\
& -L_{k}CA^{r-1}He_{k-r}-L_{k}CA^{r-2}He_{k-r+1}+...+\nonumber\\ &
L_{k}CAHe_{k-2} -L_{k}CHe_{k-1}+
w_{k-r-1} -L_{k}(CA^rw_{k-r-1}+\nonumber\\ &
CA^{r-1}w_{k-r}+...+Cw_{k-1}+v_{k}).\label{eq:error_r}
\end{align}
\begin{thm} \label{thm:nec}
Let $L_k$ be such that the filter (\ref{xhat1r}) - (\ref{xhat3r}) is unbiased.
Then
\begin{align}
H-L_{k}CA^rH =&L_{k}CA^{r-1}H=L_{k}CA^{r-2}H=\notag \\ & ...=
L_{k}CH = 0 \label{cond_r}
\end{align}
\end{thm}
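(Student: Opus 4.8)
The plan is to convert the unbiasedness requirement into a purely algebraic condition on $L_{k}$ by taking expectations in the error recursion (\ref{eq:error_r}) and then exploiting the fact that $e_k$ is an arbitrary unknown input.

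First I would apply $\mathbb{E}[\cdot]$ to both sides of (\ref{eq:error_r}). Since $w_{k-r-1},\dots,w_{k-1}$ and $v_{k}$ are zero-mean (and $L_{k},A,C,H$ are deterministic), all the noise terms vanish and one is left with a linear recursion expressing $\mathbb{E}[\varepsilon_{k-r}]$ in terms of $\mathbb{E}[\varepsilon_{k-r-1}]$ and of the means $\mathbb{E}[e_{k-r-1}],\mathbb{E}[e_{k-r}],\dots,\mathbb{E}[e_{k-1}]$, with coefficient matrices $A-L_{k}CA^{r+1}$, $H-L_{k}CA^{r}H$, $-L_{k}CA^{r-1}H$, $\dots$, $-L_{k}CH$, respectively. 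By Definition \ref{defn:unb} and (\ref{ev1}), an unbiased filter satisfies $\mathbb{E}[\varepsilon_{j-r}]=0$ at every time step $j$ (which in particular presumes the filter is initialized with an unbiased estimate), so both $\mathbb{E}[\varepsilon_{k-r}]=0$ and, applying the same hypothesis one step earlier, $\mathbb{E}[\varepsilon_{k-r-1}]=0$. Substituting these identities collapses the recursion to
\[
0 = (H-L_{k}CA^{r}H)\,\mathbb{E}[e_{k-r-1}] - L_{k}CA^{r-1}H\,\mathbb{E}[e_{k-r}] - \cdots - L_{k}CH\,\mathbb{E}[e_{k-1}].
\]

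The remaining step is to conclude that each coefficient matrix vanishes separately. The key point is that unbiasedness must hold for \emph{every} admissible unknown-input sequence, and because $e_k$ is arbitrary the vectors $\mathbb{E}[e_{k-r-1}],\dots,\mathbb{E}[e_{k-1}]\in\BBR^{p}$ may be prescribed independently of one another. Choosing one of them to be an arbitrary $\xi\in\BBR^{p}$ and the remaining ones to be zero forces the matrix multiplying $\xi$ to annihilate all of $\BBR^{p}$, hence to be the zero matrix; cycling through the $r+2$ terms gives $H-L_{k}CA^{r}H=L_{k}CA^{r-1}H=\cdots=L_{k}CH=0$, which is (\ref{cond_r}). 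I expect the only genuine subtlety to lie in this last step: the statement is false if ``unbiased'' is read as unbiasedness for a single fixed input realization, so the proof must be explicit that the filter is required to be unbiased uniformly over unknown inputs (equivalently, over all possible mean sequences $\{\mathbb{E}[e_k]\}$); the rest is the direct substitution already carried out in (\ref{eq:error_r}).
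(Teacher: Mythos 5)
Your proposal is correct and follows essentially the same route as the paper: take expectations of the error recursion (\ref{eq:error_r}), use unbiasedness to kill the $\mathbb{E}[\varepsilon_{k-r}]$ and $\mathbb{E}[\varepsilon_{k-r-1}]$ terms along with the zero-mean noises, and then invoke arbitrariness of the unknown-input sequence to force each coefficient matrix to vanish. The only difference is that you make explicit the final step (choosing the mean input vectors independently, one nonzero at a time) that the paper leaves as an appeal to ``arbitrary input sequence,'' which is a welcome clarification but not a different argument.
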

{\bf Proof:}
Since by definition, filter (\ref{xhat1r}) - (\ref{xhat3r}) is unbiased if and
only if $ \mathbb{E}[\varepsilon_{k-r-1}]=0$, it follows from (\ref{eq:error_r})
that
\begin{align}
\mathbb{E}[\varepsilon_{k-r}] =& \mathbb{E}
[(A-L_{k}CA^{r+1})\varepsilon_{k-r-1}+
(H-L_{k}CA^rH)e_{k-r-1} \nonumber\\
& -L_{k}CA^{r-1}He_{k-r}-L_{k}CA^{r-2}He_{k-r+1}+...\nonumber\\
& +L_{k}CAHe_{k-2} -L_{k}CHe_{k-1}+w_{k-r-1}-\nonumber\\
&L_{k}(CA^rw_{k-r-1}+CA^{r-1}w_{k-r}+...+Cw_{k-1}+v_{k})] \label{ev3_r}
\end{align}
Since (\ref{ev3_r}) must hold for arbitrary input sequence $e_k$, it follows
that (\ref{cond_r}) must hold for filter (\ref{xhat1r}) - (\ref{xhat3r}) to be
unbiased. \qed
\begin{cor} \label{cor_r}
Let $L_k$ be such that the filter (\ref{xhat1r}) - (\ref{xhat3r}) is unbiased.
Then, the following conditions hold
\begin{enumerate}
\item[$i)$] $p \leq l$,
\item[$ii)$] rank$(CA^rH)=p$,
\item[$iii)$] rank$(L_{k}) \geq p,\,$ for all $k$,
\item[$iv)$] rank$(CA^dH)\leq l-p,$ for $d=0,\dots,r-1$.
\end{enumerate}
\end{cor}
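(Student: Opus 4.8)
The plan is to derive each of the four conditions as a direct algebraic consequence of the unbiasedness conditions \eqref{cond_r} established in Theorem \ref{thm:nec}. First, from \eqref{cond_r} we have $H - L_k C A^r H = 0$, i.e. $L_k C A^r H = H$. Since $\rank(H) = p$ by the standing assumption, the left-hand side must also have rank $p$; but $L_k C A^r H$ factors through the matrix $C A^r H \in \BBR^{l\times p}$, whose rank is at most $\min(l,p)$. Hence $p \leq l$, giving $(i)$, and moreover $\rank(C A^r H) \geq \rank(L_k C A^r H) = \rank(H) = p$, while trivially $\rank(C A^r H) \leq p$; combining these gives $\rank(C A^r H) = p$, which is $(ii)$. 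For $(iii)$, again from $L_k C A^r H = H$ and the rank inequality $\rank(L_k C A^r H) \leq \rank(L_k)$, we get $\rank(L_k) \geq \rank(H) = p$ for every $k$.

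For $(iv)$, I would use the remaining equalities in \eqref{cond_r}, namely $L_k C A^d H = 0$ for $d = 0, \dots, r-1$. Write $L_k \in \BBR^{n\times l}$. The relation $L_k C A^r H = H$ with $\rank(H)=p$ forces $L_k$ to have a $p$-dimensional "active" column space interacting with $\col(C A^r H)$; more precisely, there is a $p$-dimensional subspace $\SV \subseteq \BBR^l$ (for instance $\SV = \col(C A^r H)$, which has dimension $p$ by $(ii)$) such that $L_k$ restricted to $\SV$ is injective — otherwise $L_k C A^r H$ could not have rank $p$. On the other hand, $L_k C A^d H = 0$ says $\col(C A^d H) \subseteq \ker(L_k)$. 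Since $L_k$ is injective on $\SV$, we have $\SV \cap \ker(L_k) = \{0\}$, so $\col(C A^d H) \cap \SV = \{0\}$ for each $d \in \{0,\dots,r-1\}$. Therefore $\dim\col(C A^d H) + \dim \SV \leq l$, i.e. $\rank(C A^d H) \leq l - p$, which is $(iv)$.

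The main obstacle is making the injectivity argument in $(iv)$ fully rigorous: the claim that "$L_k C A^r H = H$ with $\rank H = p$ implies $L_k$ is injective on some $p$-dimensional subspace $\SV$ with $\SV$ transverse to $\ker L_k$" needs care, because one must exhibit such a $\SV$ explicitly rather than merely count dimensions. The clean way is to take any right inverse $H^+$ with $H^+ H = I_p$, so that $H^+ L_k (C A^r H) = I_p$; this shows $C A^r H$ has a left inverse, confirming $(ii)$, and also that the composite map $v \mapsto L_k v$ is injective on $\SV \isdef \col(C A^r H)$ — if $L_k C A^r H \xi = 0$ then $H\xi = 0$ so $\xi = 0$ since $\rank H = p$ means $C A^r H$ has trivial kernel. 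Then $\col(C A^d H) \subseteq \ker L_k$ together with injectivity of $L_k$ on $\SV$ gives the transversality $\col(C A^d H)\cap\SV=\{0\}$ directly, and the dimension count $\rank(C A^d H) \le l - p$ follows. Each step is routine linear algebra once this bookkeeping is set up.
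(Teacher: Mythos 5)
Your proof is correct. Parts $i)$--$iii)$ follow exactly the paper's route: from \eqref{cond_r} one reads off $L_kCA^rH=H$, and the three statements are immediate rank consequences of $\rank(H)=p$ and $CA^rH\in\BBR^{l\times p}$. The only real difference is in $iv)$. The paper gets it in one line by combining $\rank(L_k)\geq p$ with the Sylvester/Frobenius rank inequality $\rank(L_k)+\rank(CA^dH)\leq\rank(L_kCA^dH)+l$ (cited from Bernstein, Prop.\ 2.5.9), using $\rank(L_kCA^dH)=0$. You instead prove the same bound from scratch by a transversality argument: $L_k$ is injective on the $p$-dimensional subspace $\mathcal{V}=\mathcal{R}(CA^rH)$ (since $L_kCA^rH\xi=0$ forces $H\xi=0$, hence $\xi=0$), while $\mathcal{R}(CA^dH)\subseteq\ker L_k$, so $\mathcal{R}(CA^dH)\cap\mathcal{V}=\{0\}$ and $\rank(CA^dH)+p\leq l$. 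Both arguments are valid; yours is self-contained and identifies concretely the subspace on which $L_k$ must act injectively (in effect re-deriving the special case of the rank inequality you need), whereas the paper's is shorter at the cost of an external citation. The injectivity step you flagged as the "main obstacle" is handled correctly by your right-inverse bookkeeping and needs no further repair.
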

{\bf Proof.} Since the filter (\ref{xhat1r})- (\ref{xhat3r}) is unbiased, it
follows from Theorem \ref{thm:nec} that (\ref{cond_r}) holds and hence
\begin{align}
 L_{k}(CA^rH)=H.\label{eq1}
\end{align}
Since rank($H$)=$p$, it then follows from (\ref{eq1}) that $iii)$ holds and
\begin{align}
 \text{rank}(CA^rH)\geq p \label{eq2}
\end{align}
Since $CA^rH\in \mathbb{R}^{l\times p}$, it follows from (\ref{eq2}) that
statement $i)$ holds. Furthermore, it follows from (\ref{eq2}) and $i)$ that
statement $ii)$ holds.
\par Finally to prove $iv)$, since (\ref{cond_r}) holds, it follows from
\cite[Proposition 2.5.9, p. 106]{bernsteinbook} that rank$(L_kCA^dH)=0$ and
therefore
\begin{align}
\mbox{rank}(L_k) + \mbox{rank}(CA^dH) & \leq \mbox{rank}(L_{k}CA^dH) + l
\nonumber \\
& = l. \label{eq:rankch1g}
\end{align}
Furthermore, using $iii)$, (\ref{eq:rankch1g}) becomes
\begin{align}
 p+\text{rank}(CA^dH)\leq l,
\end{align}
that is, rank$(CA^dH)\leq l-p.$ \qed

\begin{cor}\label{sq1}
Let $l=p$ and let $L_k$ be such that the filter (\ref{xhat1r}) - (\ref{xhat3r})
is unbiased, and let $l=p$. Then, $CA^dH = 0$ for $d=0,\,\hdots,\,r-1$ and
rank$(L_{k})=p$ for all $k$ and $r$.
\end{cor}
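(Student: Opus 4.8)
The plan is to derive everything directly from Corollary \ref{cor_r}, specialised to the square case $l=p$. First I would substitute $l=p$ into statement $iv)$ of Corollary \ref{cor_r}, which asserts $\rank(CA^dH)\le l-p$ for $d=0,\dots,r-1$. With $l=p$ this upper bound is $0$, so $\rank(CA^dH)=0$ and hence $CA^dH=0$ for each $d\in\{0,\dots,r-1\}$, which is the first assertion of the corollary.

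For the rank of $L_k$, I would combine the lower bound furnished by statement $iii)$ of Corollary \ref{cor_r}, namely $\rank(L_k)\ge p$, with the trivial upper bound coming from the dimensions of $L_k$. Since $L_k\in\BBR^{n\times l}$ and $l=p$, the matrix $L_k$ has exactly $p$ columns, so $\rank(L_k)\le p$. Combining the two inequalities gives $\rank(L_k)=p$ for every $k$; as the argument used neither a specific value of $r$ nor anything beyond the unbiasedness hypothesis already invoked in Corollary \ref{cor_r}, it holds for all admissible $r$ as well.

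There is essentially no obstacle here: the statement is an immediate specialisation of Corollary \ref{cor_r}, and the only point requiring a moment's care is that the bound $\rank(L_k)\le p$ comes from the column count of $L_k$, not from $\rank(H)=p$ or from the unbiasedness conditions, so it is logically independent of the other hypotheses. For completeness one might also remark that $CA^rH=0$ is \emph{not} forced --- indeed statement $ii)$ of Corollary \ref{cor_r} requires $\rank(CA^rH)=p$ --- which is consistent with the interpretation of $r$ as the smallest delay for which the relevant Markov parameter attains full rank.
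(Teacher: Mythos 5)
Your proof is correct and is exactly the argument the paper intends: the corollary is stated without proof precisely because it is the immediate specialisation of Corollary \ref{cor_r} to $l=p$, with part $iv)$ forcing $CA^dH=0$ and part $iii)$ combined with the fact that $L_k\in\BBR^{n\times l}=\BBR^{n\times p}$ has only $p$ columns forcing $\mathrm{rank}(L_k)=p$. Your closing remark that $CA^rH=0$ is not forced (indeed it must have rank $p$) is a correct and worthwhile observation.
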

Next, we define the Rosenbrock matrix $Z(s)$ as
\begin{align}
Z(s) \triangleq \left[ \begin{array}{cc}
 (s I-A) & H  \\
 C & 0
\end{array}  \right]. \nonumber
\end{align}
For an unbiased filter, we know from Corollary \ref{cor_r}, i)
that $p\leq l$, therefore the transfer function $G(s) \isdef
C(sI-A)^{-1}H$ has normal rank equal to $p$.

\begin{defn} \label{defn:inv_zr}
$z \in \mathbb{C}$ is an \emph{invariant zero}  \cite{bernsteinbook} of 
(\ref{x}), (\ref{y}) if
\begin{align}
{\rm rank\:}Z(z) <{\rm normal \: rank \:}Z(s). \nonumber
\end{align}
\end{defn}
\begin{defn} \label{defn:asymp_unb}
The filter (\ref{xhat1r})-(\ref{xhat3r}) is \emph{asymptotically unbiased} if
\begin{align}
\lim_{k \to \infty} \mathbb{E}[\varepsilon_{k-r}] = 0 \nonumber
\end{align}
\end{defn}
Definition \ref{defn:asymp_unb} implies that the filter (\ref{xhat1r}) -
(\ref{xhat3r}) is asymptotically unbiased if and only if
$\hat{x}_{k-r\mid k}$ converges to an unbiased estimate of the state $x_{k-r}$
as $k$ approaches infinity.\\
In the following subsection we first develop the filter and examine its
convergence properties for a square system $(l=p)$, and treat the non-square
case later.

\subsection{Square Systems}
\subsubsection{Sufficient Conditions for Unbiasedness}

\begin{lemma} \label{lem:L_r}
Let $l=p$ and let $L_{k}$ be such that (\ref{cond_r}) holds. Then
\begin{align}
L_{k} = H(CA^rH)^{-1}. \label{eq:L_squarer}
\end{align}
\end{lemma}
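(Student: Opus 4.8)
The plan is to extract from the unbiasedness condition (\ref{cond_r}) the single identity that actually constrains $L_k$ nontrivially, namely $L_k CA^r H = H$, to observe that under the square assumption $l = p$ the matrix $CA^r H$ is an invertible $p \times p$ matrix, and then to invert it.

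First I would note that, since $l = p$, we have $C \in \BBR^{p \times n}$ and hence $CA^r H \in \BBR^{p \times p}$ is square. The first equality in (\ref{cond_r}) reads $H - L_k CA^r H = 0$, that is, $L_k CA^r H = H$. Taking ranks and using that $\rank(H) = p$ together with the fact that the rank of a product is bounded above by the rank of each factor, we get $p = \rank(H) = \rank(L_k CA^r H) \le \rank(CA^r H) \le p$, so $\rank(CA^r H) = p$. (This is precisely item $ii)$ of Corollary \ref{cor_r}; note that the proof of that corollary, and of Theorem \ref{thm:nec}, relies only on (\ref{cond_r}), so the weaker hypothesis of the present lemma is enough.) A square matrix of full rank is invertible, so $(CA^r H)^{-1}$ exists.

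Then, right-multiplying $L_k CA^r H = H$ by $(CA^r H)^{-1}$ gives $L_k = H (CA^r H)^{-1}$, which is (\ref{eq:L_squarer}). Honestly there is no real obstacle here: the only step requiring any argument at all is the invertibility of $CA^r H$, and that is a one-line rank count. As a consistency check, the remaining equalities in (\ref{cond_r}), that is, $L_k CA^d H = 0$ for $d = 0, \dots, r-1$, do not over-determine $L_k$: by Corollary \ref{sq1} one has $CA^d H = 0$ for each such $d$ in the square case, so those equations are satisfied automatically regardless of $L_k$.
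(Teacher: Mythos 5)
Your proof is correct and takes essentially the same route as the paper: the paper's one-line proof combines the identity $L_k CA^rH = H$ from (\ref{cond_r}) with the invertibility of the square matrix $CA^rH$ (item $ii)$ of Corollary \ref{cor_r}), which is exactly the rank count you carry out explicitly. Your added remark that Theorem \ref{thm:nec} and Corollary \ref{cor_r} rely only on (\ref{cond_r}), so the lemma's weaker hypothesis suffices, is a fair clarification, but the substance of the argument is identical.
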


{\bf Proof.}
The proof follows from (\ref{cond_r}) and ($ii$) of Corollary \ref{cor_r} \qed

\begin{lemma} \label{lem:eig_r}
Let $l=p$ and let $L_{k}$ be such that (\ref{cond_r}) hold. Then all non-zero
eigenvalues of $(A-L_{k}CA^{r+1})$ are invariant zeros of (\ref{x}) and
(\ref{y}).
\end{lemma}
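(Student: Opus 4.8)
The plan is to show that any nonzero eigenvalue $\lambda$ of $M \triangleq A - L_k C A^{r+1}$ forces the Rosenbrock matrix $Z(\lambda)$ to be rank-deficient, which by Definition \ref{defn:inv_zr} makes $\lambda$ an invariant zero of (\ref{x}), (\ref{y}). First I would record the structural facts available in the square case: by Lemma \ref{lem:L_r} we have $L_k = H(CA^rH)^{-1}$, and by Corollary \ref{sq1} we have $CA^dH = 0$ for $d = 0,\dots,r-1$. Together these give $CA^jL_k = CA^jH(CA^rH)^{-1} = 0$ for $j = 0,\dots,r-1$ and $CA^rL_k = I$, whence left-multiplying $M$ by $CA^j$ produces the identities $CA^jM = CA^{j+1}$ for $j = 0,\dots,r-1$ and $CA^rM = CA^{r+1} - CA^{r+1} = 0$.

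Next, let $v \neq 0$ satisfy $Mv = \lambda v$. Applying $CA^j$ to both sides and using $CA^jM = CA^{j+1}$ yields the telescoping recursion $CA^{j+1}v = \lambda\,CA^jv$ for $j = 0,\dots,r-1$, so $CA^jv = \lambda^jCv$ for $j = 0,\dots,r$. Applying $CA^r$ to $Mv = \lambda v$ and using $CA^rM = 0$ gives $0 = \lambda\,CA^rv$; since $\lambda \neq 0$ this forces $CA^rv = 0$, and then $CA^rv = \lambda^rCv$ (again using $\lambda \neq 0$) forces $Cv = 0$. This extraction of $Cv = 0$ from the eigenequation via the telescoping identities is the crux of the proof; I expect the index bookkeeping here to be the main thing to get right, and everything afterwards is routine.

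Finally I would display an explicit nonzero null vector of $Z(\lambda)$. Set $\eta \triangleq (CA^rH)^{-1}CA^{r+1}v$. The eigenequation $Av - L_kCA^{r+1}v = \lambda v$ rearranges to $(\lambda I - A)v = -L_kCA^{r+1}v = -H\eta$, i.e.\ $(\lambda I - A)v + H\eta = 0$; combined with $Cv = 0$ this says $Z(\lambda)\begin{bmatrix} v \\ \eta \end{bmatrix} = 0$, and since $v \neq 0$ the vector $\begin{bmatrix} v \\ \eta \end{bmatrix}$ is nonzero, so $\rank Z(\lambda) < n+p$. Since $l = p$ and $G(s) = C(sI-A)^{-1}H$ has normal rank $p$, the $(n+p)\times(n+p)$ matrix $Z(s)$ has $\det Z(s)$ equal, up to sign, to $\det(sI-A)\det G(s)$, which is not identically zero, so $Z(s)$ has normal rank $n+p$; hence $\rank Z(\lambda) < \text{normal rank }Z(s)$ and $\lambda$ is an invariant zero. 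Note that the argument never inverts $\lambda I - A$, so it remains valid when $\lambda$ happens to be an eigenvalue of $A$.
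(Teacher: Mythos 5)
Your proof is correct and takes essentially the same route as the paper's: both use $L_k = H(CA^rH)^{-1}$ (Lemma \ref{lem:L_r}) together with $CA^dH=0$ for $d=0,\dots,r-1$ (Corollary \ref{sq1}) to extract $Cv=0$ from the eigenequation, and both then exhibit the same nonzero null vector of $Z(\lambda)$ to conclude rank deficiency against normal rank $n+p$. Your telescoping identities $CA^jM=CA^{j+1}$ and $CA^rM=0$ are simply a cleaner bookkeeping of the paper's expansion of $A^{r+1}\nu$ via $A\nu=\lambda\nu-H\mu$, and your Schur-complement determinant argument for the normal rank replaces the paper's citation of \cite{bernsteinbook}; neither difference changes the substance of the argument.
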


{\bf Proof.}
Let $ \lambda \neq 0$ be an eigenvalue of $(A-L_{k}CA^{r+1})$. Then using
Lemma
\ref{lem:L_r}, it follows that
\begin{align}
{ \rm det}(A-H(CA^rH)^{-1}CA^{r+1} - \lambda I) = 0. \nonumber
\end{align}
Next, let $\nu \neq 0$ be such that
\begin{align}
(A-H(CA^rH)^{-1}CA^{r+1} - \lambda I) \nu =0, \nonumber
\end{align}
and thus
\begin{align}
(A- \lambda I) \nu - H(CA^rH)^{-1}CA^{r+1} \nu = 0. \nonumber
\end{align}
Next, defining 
\begin{align}
\mu \triangleq - (CA^rH)^{-1}CA^{r+1} \nu,\label{sqcom}
\end{align}
it follows that
\begin{align}
(A- \lambda I) \nu + H\mu = 0,\label{sqim1}
\end{align}
and thus yielding 
\begin{align}
 A\nu=\lambda \nu-H\mu\label{p1}
\end{align}
and
\begin{align}
 A^{r+1}\nu=\lambda A^r\nu-A^rH\mu. \label{p2}
\end{align}
Repeatedly using (\ref{p1}) in (\ref{p2}), we get 
\begin{align}
A^{r+1}\nu=\lambda^{r+1}\nu-\lambda^rH\mu-\lambda^{r-1}AH\mu-\hdots-\lambda
A^{r-1}H\mu-A^rH\mu. \nonumber
\end{align}
Left multiplying by C on both sides gives
\begin{align}
 CA^{r+1}\nu=\lambda^{r+1}C\nu-\lambda^rCH\mu-\lambda^{r-1}CAH\mu-\hdots-\lambda
CA^{r-1}H\mu-CA^rH\mu. \nonumber
\end{align}
Since from Corollary \ref{sq1} it follows that $CA^dH=0$ for
$d=0,\,\hdots,\,r-1$ and since $CA^rH$ is invertible, we have
\begin{align}
(CA^rH)^{-1}CA^{r+1}\nu=\lambda^{r+1}(CA^rH)^{-1}C\nu-\mu,\nonumber
\end{align}
and thus
\begin{align}
 \mu=-(CA^rH)^{-1}CA^{r+1}\nu+\lambda^{r+1}(CA^rH)^{-1}C\nu.\label{sqcom2}
\end{align}
Comparing (\ref{sqcom2}) with (\ref{sqcom}), it follows that
\begin{align}
 \lambda^{r+1}(CA^rH)^{-1}C\nu=0.\nonumber
\end{align}
Since $\lambda\neq0$, and $(CA^rH)^{-1}$ is full rank, it follows that
\begin{align}
 C\nu=0.\label{sqim2}
\end{align}
Combining (\ref{sqim1}) and (\ref{sqim2}), we have
\begin{align}
\left[ \begin{array}{cc}
 (\lambda I-A) & -H  \\
 C & 0
\end{array}  \right]
\left[ \begin{array}{cc}
 \nu  \\
   \mu  \\
 \end{array}  \right] =0. \label{eigproofA.18}
\end{align}
Since $\nu \neq 0$, it follows that $\left[ \begin{array}{cc}
 \nu  \\
   \mu \\
 \end{array}  \right] \neq 0 $ and noting that normal rank $Z(s) = n+p$ from
 Corollary 12.10.6 in \cite{bernsteinbook}, it follows that
\begin{align}
{\rm rank}\left[ \begin{array}{cc}
 (\lambda I-A) & -H  \\
 C & 0
\end{array}  \right]<{\rm normal \: rank}\left[ \begin{array}{cc}
 (z I-A) & -H  \\
 C & 0
\end{array}  \right]= n+p . \label{eigproofA.19}
\end{align}
Therefore $ \lambda$ is an invariant zero of $(A,H,C)$.\qed\\
In Lemma (\ref{lem:eig_r}), we established the relationship between
the invariant zeros and the eigenvalues of $(A-L_kCA^{r+1})$. Next, we
use this relationship to examine the convergence of the filter.
\begin{thm} \label{thm:suff_r} Let $l=p$ and let $L_k$ be such that
  (\ref{cond_r}) hold. Then the filter (\ref{xhat1r}) - (\ref{xhat3r})
  is unbiased if and only if (\ref{x}), (\ref{y}) have no invariant
  zeros, while the filter (\ref{xhat1r}) - (\ref{xhat3r}) is
  asymptotically unbiased if and only if all of the invariant zeros
  lie within the unit circle.
\end{thm}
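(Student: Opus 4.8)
The plan is to reduce the analysis of the bias $\mathbb{E}[\varepsilon_{k-r}]$ to the powers of a single constant matrix, and then to pin down the spectrum of that matrix. Substituting (\ref{cond_r}) into the error expansion (\ref{eq:error_r}) and using $l=p$, Lemma \ref{lem:L_r} forces the gain to be the constant $L \isdef H(CA^rH)\inv$, and all the unknown-input terms together with $H-LCA^rH$ drop out, leaving
\begin{align}
\varepsilon_{k-r} = F\varepsilon_{k-r-1} + \nu_k, \qquad F \isdef A - LCA^{r+1}, \nonumber
\end{align}
where $\nu_k$ is a zero-mean linear combination of the process and measurement noises. Taking expectations and iterating gives $\mathbb{E}[\varepsilon_{k-r}] = F^{\,k-k_0}\,\mathbb{E}[\varepsilon_{k_0-r}]$ for any starting index $k_0$; that is, the bias is propagated solely by powers of $F$, starting from the (in general nonzero) bias of the initialization. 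Hence the filter is unbiased precisely when $F$ is nilpotent (the initialization bias is annihilated in finitely many steps, for every initialization), and it is asymptotically unbiased precisely when $F$ is discrete-time asymptotically stable, i.e. $\sprad(F) < 1$.

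It then remains to identify $\spec(F)$ with the invariant zeros of (\ref{x}), (\ref{y}). Lemma \ref{lem:eig_r} already gives one inclusion: every nonzero eigenvalue of $F$ is an invariant zero, and this alone settles the ``if'' directions. If all invariant zeros lie in the open unit disk, then every eigenvalue of $F$ (the nonzero ones being invariant zeros, the rest being $0$) lies in the open unit disk, so $F^m \to 0$ and the bias vanishes asymptotically; if there are no invariant zeros, then $F$ has no nonzero eigenvalue, hence is nilpotent, and the bias is annihilated after at most $n$ steps. For the converse inclusion I would establish the determinant identity
\begin{align}
\det(sI - F) = \kappa\, s^{\,p(r+1)}\, \det Z(s), \qquad \kappa \isdef (-1)^p/\det(CA^rH), \nonumber
\end{align}
obtained by writing $\det(sI-F) = \det(sI-A)\det\!\big(I + CA^{r+1}(sI-A)\inv L\big)$ via a Sylvester swap, then using $A^{r+1}(sI-A)\inv = s^{r+1}(sI-A)\inv - \sum_{j=0}^{r} s^{\,r-j}A^j$ together with the chain condition $CA^jH = 0$ for $j<r$ (Corollary \ref{sq1}) to reduce $CA^{r+1}(sI-A)\inv H$ to $s^{r+1}G(s) - CA^rH$, and finally a Schur-complement evaluation of $\det Z(s)$. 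Both sides are polynomials of degree $n$, so the identity, valid where $sI-A$ is invertible, holds identically. Since the normal rank of $Z(s)$ is $n+p$ (as used in Lemma \ref{lem:eig_r}), the invariant zeros are exactly the roots of $\det Z(s)$, and therefore $\spec(F)$ consists of the invariant zeros together with an eigenvalue $0$ of multiplicity $p(r+1)$.

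Assembling the equivalences: from the determinant identity, $F$ is nilpotent if and only if $\det Z(s)$ has no nonzero root, i.e. if and only if (\ref{x}), (\ref{y}) have no invariant zeros; and $\sprad(F) < 1$ if and only if every root of $\det(sI-F)$, hence every invariant zero, lies in the open unit disk. Combining this with the first paragraph yields both claims of the theorem.

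The main obstacle is the converse inclusion $\{\text{invariant zeros}\} \subseteq \spec(F)$: Lemma \ref{lem:eig_r} supplies only the other direction, so the determinant identity above (or, equivalently, a similarity transformation of $(A,H,C)$ to the relative-degree normal form, in which $F$ becomes block triangular with a nilpotent shift block of size $p(r+1)$ and a zero-dynamics block whose spectrum is exactly the invariant zeros) is where the real work lies. The only delicate bookkeeping is the $s^{\,p(r+1)}$ factor and the status of invariant zeros at the origin; this is immaterial for the asymptotic statement, since $0$ always lies in the open unit disk.
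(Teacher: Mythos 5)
Your proposal is correct and follows the same skeleton as the paper's proof: reduce the bias to powers of $F \isdef A - L_kCA^{r+1}$ via the error recursion, then identify $\spec(F)$ with the invariant zeros. The difference lies in how the spectral identification is handled, and here your version is actually more complete than the paper's. The paper's proof invokes only Lemma \ref{lem:eig_r}, which gives the inclusion $\spec(F)\setminus\{0\} \subseteq \{\text{invariant zeros}\}$ and therefore settles only the ``if'' directions; the ``only if'' directions tacitly require the reverse inclusion, which the paper establishes only later (Lemma \ref{lem:eign}, stated for the non-square case but valid here, proved by a null-vector argument on $Z(z)$) and never cites in this proof. You close that gap explicitly with the determinant identity $\det(sI-F) = \kappa\, s^{p(r+1)}\det Z(s)$, which I have checked (the telescoping identity for $A^{r+1}(sI-A)\inv$, the Sylvester swap, and the Schur evaluation of $\det Z(s)$ all go through given $CA^jH=0$ for $j<r$ from Corollary \ref{sq1}); this buys both inclusions at once together with the multiplicity $p(r+1)$ of the zero eigenvalue, which the paper's two lemmas do not provide. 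Two small caveats, both inherited from the paper rather than introduced by you: the paper reads ``unbiased'' as $F=0$ whereas you read it as $F$ nilpotent (only the nilpotency reading makes the stated equivalence with ``no invariant zeros'' come out right), and an invariant zero exactly at the origin would break the ``only if'' direction of the exact-unbiasedness claim under either reading --- you flag this edge case, the paper does not.
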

{\bf Proof.}
First, taking the expected values of both sides of (\ref{eq:error_r}) and using
(\ref{cond_r}), and noting that $
\mathbb{E}[w_{k-r-1}] = \mathbb{E}[v_k] = \mathbb{E}[w_{k-r}]=
\mathbb{E}[w_{k-1}] = 0,$ it follows
that
\begin{align}
\mathbb{E}[\varepsilon_{k-r}] =
\mathbb{E}[(A-LCA^{r+1})\varepsilon_{k-r-1}]=(A-LCA^{r+1})\mathbb{E}[
\varepsilon_ {
k-r-1} ] .
\label{ers}
\end{align}
Therefore, filter (\ref{xhat1r}) - (\ref{xhat3r}) is unbiased if and
only if $(A-LCA^{r+1})$ is zero or equivalently all eigenvalues of
$(A-LCA^{r+1})$ are zero. Furthermore, (\ref{xhat1r}) - (\ref{xhat3r})
is asymptotically unbiased if and only if $(A-LCA^{r+1})$ is
asymptotically stable.  It follows from Lemma \ref{lem:eig_r} that all
non-zero eigenvalues of $(A-LCA^{r+1})$ are invariant zeros of
(\ref{x}), (\ref{y}). Subsequently, the filter (\ref{xhat1r}) -
(\ref{xhat3r}) is asymptotically unbiased if and only if no invariant
zeros of (\ref{x}) and (\ref{y}) are non-minimum-phase and there are
no zeros on the unit circle (all of the invariant zeros are within the
unit circle). The filter gives rise to a persistent reconstruction error if the
zeros lie on the unit circle. \qed
\subsection{Input Reconstruction}

We discussed necessary and sufficient conditions to obtain the unbiased
estimates of states. Next, we consider using these estimates to reconstruct the
unknown inputs.

\begin{prop}\label{prop_r}
Let $l=p$ and let $L_k$ be such that (\ref{cond_r}) hold, and let $\hat
x_{k-r|k}$ be an unbiased estimate of $x_{k-r}$. Then
\begin{align}
\hat e_{k-r-1} \isdef (CA^rH)^{-1}(y_{k} - C\hat x_{k|k-1}) \label{IR_r}
\end{align}
is an unbiased estimate of $e_{k-r-1}$.
\end{prop}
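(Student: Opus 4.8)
The plan is to show that $\hat e_{k-r-1}$ defined by \eqref{IR_r} has expected value $e_{k-r-1}$, using the error expression \eqref{eq:error_r} together with the unbiasedness conditions \eqref{cond_r} and the structure established in Corollary \ref{sq1}. First I would write $y_k = Cx_k + v_k$ and expand $C\hat x_{k\mid k-1} = CA^{r+1}\hat x_{k-r-1\mid k-1}$ using \eqref{xhat3r}, and likewise expand $x_k$ in terms of $x_{k-r-1}$ by iterating the state equation \eqref{x}, so that $x_k = A^{r+1}x_{k-r-1} + A^r H e_{k-r-1} + A^{r-1}He_{k-r} + \cdots + He_{k-1} + (\text{noise terms})$. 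Subtracting, the $A^{r+1}$ terms combine into $A^{r+1}\varepsilon_{k-r-1}$ (using $\hat x_{k-r-1\mid k-1}$ and $x_{k-r-1}$), and I obtain
\begin{align}
y_k - C\hat x_{k\mid k-1} = CA^{r+1}\varepsilon_{k-r-1} + CA^rHe_{k-r-1} + CA^{r-1}He_{k-r} + \cdots + CHe_{k-1} + (\text{noise}). \nonumber
\end{align}

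Next I would invoke Corollary \ref{sq1}, which gives $CA^dH = 0$ for $d = 0,\dots,r-1$ in the square case, so every term $CA^{r-1}He_{k-r},\dots,CHe_{k-1}$ vanishes. This leaves $y_k - C\hat x_{k\mid k-1} = CA^{r+1}\varepsilon_{k-r-1} + CA^rHe_{k-r-1} + (\text{noise})$. Multiplying by $(CA^rH)^{-1}$, which exists by part $(ii)$ of Corollary \ref{cor_r}, yields $\hat e_{k-r-1} = (CA^rH)^{-1}CA^{r+1}\varepsilon_{k-r-1} + e_{k-r-1} + (CA^rH)^{-1}(\text{noise})$. Taking expectations and using that the filter is unbiased, so $\mathbb{E}[\varepsilon_{k-r-1}] = 0$, together with the zero-mean property of $w$ and $v$, gives $\mathbb{E}[\hat e_{k-r-1}] = e_{k-r-1}$, which is the claim.

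The main obstacle — really a bookkeeping point rather than a deep one — is making sure the open-loop prediction $\hat x_{k\mid k-1} = A^{r+1}\hat x_{k-r-1\mid k-1}$ is paired against the correctly iterated true state so that the cross terms telescope cleanly into $A^{r+1}\varepsilon_{k-r-1}$, and that the indices on the unknown-input terms line up exactly with the Markov parameters $CA^dH$ for $d=0,\dots,r$ so that Corollary \ref{sq1} can be applied to kill all but the $d=r$ term. One should also note that if $\hat x_{k-r\mid k}$ is merely asymptotically unbiased rather than exactly unbiased, the same computation shows $\hat e_{k-r-1}$ is asymptotically unbiased; but since the hypothesis here assumes $\hat x_{k-r\mid k}$ is an (exactly) unbiased estimate, the conclusion as stated follows directly.
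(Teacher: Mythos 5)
Your proof is correct, but it follows a genuinely different (if closely related) route from the paper's. You expand the innovation $y_k - C\hat x_{k\mid k-1}$ directly by propagating the true state forward from $x_{k-r-1}$, obtaining $CA^{r+1}\varepsilon_{k-r-1}$ plus the Markov-parameter terms $CA^dHe_{k-1-d}$ for $d=0,\dots,r$, and you then invoke Corollary \ref{sq1} to annihilate every term except $d=r$. The paper instead inserts the identity $(CA^rH)^{-1} = (CA^rH)^{-1}CA^rH(CA^rH)^{-1} = (CA^rH)^{-1}CA^rL_k$ (via Lemma \ref{lem:L_r}) and recognizes $L_k(y_k - C\hat x_{k\mid k-1})$ as $\hat x_{k-r\mid k}-\hat x_{k-r\mid k-1}$ from the filter update \eqref{xhat1r}; the intermediate input terms are then buried inside $\varepsilon_{k-r}$ and disposed of by the hypothesis $\mathbb{E}[\varepsilon_{k-r}]=0$ rather than by an explicit appeal to the vanishing Markov parameters. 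Your version has the minor advantages of needing only $\mathbb{E}[\varepsilon_{k-r-1}]=0$ (not also $\mathbb{E}[\varepsilon_{k-r}]=0$) and of making visible exactly where the condition $CA^dH=0$, $d=0,\dots,r-1$, enters; the paper's version is shorter because it reuses the already-derived error recursion and the explicit gain formula. Your closing remark about the asymptotically unbiased case is a sensible observation, though not required for the statement as given.
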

{\bf Proof.} Since rank$(CA^rH) = p$, we can define
\begin{align}
\hat e_{k-r-1} \isdef (CA^rH)^{-1}(y_{k} - C\hat x_{k|k-1}). \label{eq:eest1}
\end{align}
Next, using (\ref{eq:eest1}), (\ref{xhat1r}), (\ref{see_r}),
(\ref{eq:L_squarer}) it follows that
\begin{align}
\hat e_{k-r-1} & = (CA^rH)^{-1}CA^rH(CA^rH)^{-1} (y_{k} - C\hat x_{k|k-1})
\nonumber \\
& = (CA^rH)^{-1} CA^rL_{k}(y_{k} - C\hat x_{k|k-1}) \nonumber \\
& = (CA^rH)^{-1} CA^r(\hat x_{k-r|k} - \hat x_{k-r|k-1}) \nonumber \\
& = (CA^rH)^{-1} CA^r(x_{k-r} + \varepsilon_{k-r} -\hat x_{k-r|k-1})
\nonumber \\
& = (CA^rH)^{-1} CA^r(Ax_{k-r-1} + He_{k-r-1} + w_{k-r-1} + \varepsilon_{k-r} -
A \hat x_{k-r-1|k-1}) \nonumber \\
& = (CA^rH)^{-1} CA^r(A\varepsilon_{k-r-1} + He_{k-r-1} + w_{k-r-1} +
\varepsilon_{k-r}).
\end{align}
Finally, taking the expected values of both sides and noting that 
\begin{align}
\mathbb{E}[\varepsilon_{k-r}] &= \mathbb{E}[\varepsilon_{k-r-1}] =
\mathbb{E}[w_{k-r-1}] = 0,
\end{align}
 it follows that
\begin{align*}
\mathbb{E}[\hat e_{k-r-1}] = (CA^rH)^{-1} CA^rH \mathbb{E}[e_{k-r-1}] =
\mathbb{E}[e_{k-r-1}]. \tag*{$\square$}
\end{align*}
\subsection{Non-square Systems}\label{non-sqr}
In the previous subsection we dealt with square systems.
In this section we explore the possibility of input
reconstruction with a delay for non-square systems $(l\neq p)$. 
In the context of non square systems, first we show that the
invariant-zeros of the non-square system are a subset of the eigenvalues of
$(A-L_{k}CA^{r+1})$ as follows.

\begin{lemma} \label{lem:eign}
Let $L_{k}$ be such that (\ref{cond_r}) is
satisfied. Then the invariant zeros of system (\ref{x}), (\ref{y}) are a subset
of the eigenvalues of $(A-L_{k}CA^{r+1})$.
\end{lemma}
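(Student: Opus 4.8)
The plan is to reverse the argument in the proof of Lemma~\ref{lem:eig_r}: instead of starting from an eigenvalue of $(A-L_{k}CA^{r+1})$, I would start from an invariant zero of (\ref{x}), (\ref{y}). First I would record the eigenvector characterization of invariant zeros that is available here. Since $p\le l$, the same reasoning as in the proof of Lemma~\ref{lem:eig_r} (now using that $G(s)=C(sI-A)^{-1}H$ has normal rank $p$) gives that the normal rank of $Z(s)$ equals $n+p$, so $Z(s)$ has full column rank generically; hence $z\in\mathbb{C}$ is an invariant zero if and only if there exist $\nu\in\mathbb{C}^n$ and $\mu\in\mathbb{C}^p$, not both zero, with $(zI-A)\nu+H\mu=0$ and $C\nu=0$. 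A preliminary observation is that $\nu\neq 0$: were $\nu=0$, then $H\mu=0$, and since $\rank(H)=p$ this would force $\mu=0$, contradicting $(\nu,\mu)\neq(0,0)$.

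Next I would use $A\nu=z\nu+H\mu$ and iterate it, just as (\ref{p1}) is substituted repeatedly into (\ref{p2}) in the proof of Lemma~\ref{lem:eig_r}, to obtain $A^{r+1}\nu=z^{r+1}\nu+z^{r}H\mu+z^{r-1}AH\mu+\cdots+zA^{r-1}H\mu+A^{r}H\mu$. This is the one place where the non-square case genuinely differs from the square case: we may no longer claim $CA^{d}H=0$ for $d<r$, because Corollary~\ref{sq1} required $l=p$; only the weaker identities $L_{k}CA^{d}H=0$ for $d=0,\dots,r-1$ and $L_{k}CA^{r}H=H$ from (\ref{cond_r}) are available. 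Accordingly, rather than simplifying $CA^{r+1}\nu$ by itself, I would left-multiply the iterated identity by $L_{k}C$ and then invoke $C\nu=0$ together with (\ref{cond_r}) to collapse the sum to $L_{k}CA^{r+1}\nu=H\mu$.

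Finally, combining $L_{k}CA^{r+1}\nu=H\mu$ with $A\nu=z\nu+H\mu$ yields $(A-L_{k}CA^{r+1})\nu=(z\nu+H\mu)-H\mu=z\nu$, and since $\nu\neq 0$ this exhibits $z$ as an eigenvalue of $(A-L_{k}CA^{r+1})$. As $z$ was an arbitrary invariant zero of (\ref{x}), (\ref{y}), the claimed inclusion follows.

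I do not anticipate a real obstacle; the only subtleties are the order of operations just described (multiply by $L_{k}C$ before applying (\ref{cond_r}), since the individual Markov parameters $CA^{d}H$ need not vanish in the non-square case), and justifying the eigenvector characterization of invariant zeros via the same normal-rank count, normal rank $Z(s)=n+p$, that underlies the square-system argument.
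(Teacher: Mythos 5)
Your proposal is correct and follows essentially the same route as the paper's own proof: take a null vector $(\nu,\mu)$ of the Rosenbrock matrix at an invariant zero $z$, show $\nu\neq 0$ via $\rank(H)=p$, iterate the state equation, apply $L_kC$ together with $C\nu=0$ and (\ref{cond_r}) to obtain $L_kCA^{r+1}\nu=H\mu$ (up to the sign convention on $\mu$), and conclude $(A-L_kCA^{r+1})\nu=z\nu$. The only cosmetic differences are the sign of $\mu$ and that the paper left-multiplies by $L_kCA^{r}$ before iterating rather than iterating to $A^{r+1}\nu$ first; your explicit justification of the eigenvector characterization via normal rank $Z(s)=n+p$ is a point the paper leaves implicit.
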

{\bf Proof.} 
Let $z$ be an invariant zero of (\ref{x}), (\ref{y}), and let vector
$\left[ \begin{array}{cc} \nu  \\
   \mu \\
 \end{array}  \right] \neq 0 $ be such that
\begin{align} 
\left[ \begin{array}{cc}
 (zI-A) & -H  \\
 C & 0
\end{array}  \right]
\left[ \begin{array}{cc}
 \nu  \\
   \mu  \\
 \end{array}  \right] =0.
\label{n1}
\end{align}
Therefore
\begin{align}
(zI-A)\nu - H\mu&=0,\label{n2}\\
C\nu&=0.\label{n3}
\end{align}
Note that if $\nu=0$, it can be seen from (\ref{n2}) that $H\mu=0$, but since
rank$(H) = p$, $\mu=0$ violating the assumption 
$\left[ \begin{array}{cc} \nu 
\\
   \mu \\
 \end{array}  \right] \neq 0 .$ Hence $\nu\neq 0$.\\
 
Next, left multiplying (\ref{n2}) by $L_{k}CA^r$ and rearranging,
\begin{align}
zL_{k}CA^r\nu-L_{k}CA^{r+1}\nu=L_{k}CA^rH\mu.\label{n6_r}
 \end{align}
Also it follows from (\ref{n2}) that,
\begin{align}
 A\nu=z\nu-H\mu. \label{rear}
\end{align}
Next using (\ref{rear}) in (\ref{n6_r}) and rearranging, 
\begin{eqnarray}
 \lefteqn{z^{r+1}L_{k}C\nu-z^rL_{k}CH\mu-z^{r-1}L_{k}
CAH\mu-\hdots} {}\nonumber \\&&{}-zL_{k}
CA^{r-1}H\mu-L_{k}CA^{r+1}\nu=L_{k}CA^rH\mu.
\end{eqnarray}
and using (\ref{n3}), (\ref{cond_r}),
\begin{align}
-L_{k}CA^{r+1}\nu=H\mu.\label{n7}
\end{align} 
Using (\ref{n7}) in (\ref{n2}),
\begin{align}
(zI-A)\nu + L_{k}CA^{r+1}\nu = 0,\nonumber
\end{align} 
\begin{align}
zI\nu-A\nu + L_{k}CA^{r+1}\nu = 0,\nonumber
\end{align}
and further rearranging, we have
\begin{align}
(A-L_{k}CA^{r+1}-zI)\nu=0.
\end{align}
Since $\nu\neq$ 0, It follows that $z$ is an eigenvalue of
$(A-L_{k}CA^{r+1}).$
\qed\\
For a non-square system, we note however that the converse of Lemma
\ref{lem:eign} does not hold as the following counter example demonstrates.\newline
Consider a state space system characterized by the following $A,H,C$
matrices.
\begin{gather*}\label{counterex}
\begin{align}
A &=\matl{cccccccccccc}-0.95 & -0.04 & 0 & 0 & 0 & 0 & 0 & 0 & 0 & 0
  & 0 & 0\\
 0.025 & 1 & 0 & 0 & 0 & 0 & 0 & 0 & 0 & 0
  & 0 & 0\\
  0 & 0 & 0.97 & -0.06 & 0 & 0 & 0 & 0 & 0 & 0
  & 0 & 0\\
  0 & 0 & 0.05 & 1 & 0 & 0 & 0 & 0 & 0 & 0
  & 0 & 0\\
 0 & 0 & 0 & 0 & 0.95 & -0.05 & 0 & 0 & 0 & 0
  & 0 & 0\\
0 & 0 & 0 & 0 & 0.1 & 1 & 0 & 0 & 0 & 0
  & 0 & 0\\
0 & 0 & 0 & 0 & 0 & 0 & 0.98 & -0.04 & 0 & 0
  & 0 & 0\\
0 & 0 & 0 & 0 & 0 & 0 & 0.05 & 1 & 0 & 0
  & 0 & 0\\
0 & 0 & 0 & 0 & 0 & 0 & 0 & 0 & 0.95 & -0.08
  & 0 & 0\\
0 & 0 & 0 & 0 & 0 & 0 & 0 & 0 & 0.05 & 1
  & 0 & 0\\
0 & 0 & 0 & 0 & 0 & 0 & 0 & 0 & 0 & 0
  & 0.95 & -0.06\\
0 & 0 & 0 & 0 & 0 & 0 & 0 & 0 & 0 & 0
  & 0.1 & 1
    \matr, 
     \quad H = \matl{cc} 0.4 & 0\\0 & 0\\0.2 & 0\\0 & 0\\0.2 & 0\\0 &
 0\\0 & 0.2 \\0 & 0\\0 & 0.2\\0 & 0\\0 & 0.2\\0 & 0\matr,\notag \\
   C&=\matl{cccccccccccc} 0.25 & 2 & 0 & 0 & 0 & 0 & 0.5 & 2 & 0 & 0 & 0 &
    0\\0 & 0 & 0.5 & 2 & 0 & 0 & 0 & 0 & 0.5 & 2 & 0 &
    0\\0 & 0 & 0 & 0 & 0.5 & 1 & 0 & 0 & 0 & 0 & 0.5 &
    1 \matr.
\end{align}
\end{gather*}  
It is seen that this system has an invariant zero at $0.8$ of
multiplicity two. The eigenvalues of $(A-L_kCA^2)$, a delay of
one-time step, ($r=1$), are found to be $0.7528,\,
0.9782+0.0707i,\,0.9782-0.0707i,\,0.9750 + 0.0443i,\,0.9750 -
0.0443i,\,1.0076,\,0.8,\,0.8.$ It can be seen that the invariant zeros
of the non-square system are the eigenvalues of $(A-L_kCA^2)$ in
accordance with lemma \ref{lem:eign}, but along with other spurious
eigenvalue of which are not the invariant zeros of the system. This
result is obtained using a value of $L_k$ that satisfies
(\ref{cond_r}). The calculation of $L_k$ in the example \eqref{counterex} is
based on the procedure which is discussed next.  
\par We note that in the non-square case, an infinite number of solutions
for $L_{k}$ that satisfy (\ref{cond_r}) are possible. The following
results thus derive the $L_{k}$ that minimizes the trace of the error
covariance and hence the minimum variance gain. $Q_k$ and $R_k$ are
the process noise covariance and sensor noise covariance respectively.
\begin{fact} \label{fact:Pgd} Let $L_{k}$ be such that the filter 
(\ref{xhat1r}) - (\ref{xhat3r}) is unbiased. Then
\begin{eqnarray}
\lefteqn{ P_{k-r\mid k} =  (A-L_{k}CA^{r+1})P_{k-r-1|k-1}
(A-L_{k}CA^{r+1})^\rmT }
{}\nonumber\\
&& {}+ (I-L_{k}CA^r)Q_{k-r-1}(I-L_{k}CA^r)^\rmT +
(L_{k}CA^{r-1})Q_{k-r}(L_{k}CA^{r-1})^\rmT+{}\nonumber\\
 && {} \hdots +(L_{k}C)Q_{k-1}(L_{k}C)^\rmT+
L_{k}R_{k}L_{k}^\rmT \label{eq:Pgd}
\end{eqnarray}
\end{fact}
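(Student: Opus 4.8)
The plan is to derive the recursion for $P_{k-r\mid k}$ directly from the error equation (\ref{eq:error_r}) together with the unbiasedness conditions (\ref{cond_r}). First I would start from the expression for $\varepsilon_{k-r}$ in (\ref{eq:error_r}) and substitute the conditions $H - L_k CA^r H = L_k CA^{r-1}H = \cdots = L_k CH = 0$, which annihilate every term involving $e_{k-r-1}, e_{k-r}, \ldots, e_{k-1}$. What survives is
\begin{align}
\varepsilon_{k-r} = (A - L_k CA^{r+1})\varepsilon_{k-r-1} + (I - L_k CA^r)w_{k-r-1} + (L_k CA^{r-1})w_{k-r} + \cdots + (L_k C)w_{k-1} - L_k v_k,\nonumber
\end{align}
so that $\varepsilon_{k-r}$ is a linear combination of $\varepsilon_{k-r-1}$, the process noises $w_{k-r-1}, \ldots, w_{k-1}$, and the measurement noise $v_k$.

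Next I would form $P_{k-r\mid k} = \mathbb{E}[\varepsilon_{k-r}\varepsilon_{k-r}^\rmT]$ by expanding this linear combination. The key observation is that all the cross terms vanish: the noises $w$ and $v$ are zero-mean and white, hence mutually uncorrelated and uncorrelated across the distinct time indices appearing above; moreover $\varepsilon_{k-r-1}$ depends only on data and noises up through time $k-r-1$ (indices strictly earlier than $k-r-1$ in fact, by induction on the recursion), so it is uncorrelated with each of $w_{k-r-1}, \ldots, w_{k-1}$ and with $v_k$. Therefore the expectation of the outer product collapses to the sum of the ``diagonal'' contributions, giving exactly the claimed identity (\ref{eq:Pgd}) with $Q_j = \mathbb{E}[w_j w_j^\rmT]$ and $R_k = \mathbb{E}[v_k v_k^\rmT]$.

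The main obstacle is the bookkeeping needed to justify that every cross term really is zero — in particular, verifying carefully that $\varepsilon_{k-r-1}$ is a function of noise terms all indexed strictly before $k-r-1$ (so that it is uncorrelated with $w_{k-r-1}$, which appears in the current step), and that the various $w_j$ for $j = k-r-1, \ldots, k-1$ are all distinct and hence mutually uncorrelated. This is a routine but slightly delicate induction on the filter recursion; once it is in place, the identity follows by linearity of expectation. I would state this whiteness/independence argument explicitly and then simply collect terms.
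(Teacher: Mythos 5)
Your overall route is the same as the paper's (the paper's proof is a one--line instruction to substitute (\ref{eq:error_r}) into (\ref{ecmg}) and invoke (\ref{cond_r})), and the first half of your argument is fine: the unbiasedness conditions annihilate every $e$--term, leaving $\varepsilon_{k-r}$ as a linear combination of $\varepsilon_{k-r-1}$, $w_{k-r-1},\dots,w_{k-1}$ and $v_k$ (up to harmless sign slips on the $w_{k-r},\dots,w_{k-1}$ coefficients, which disappear in the quadratic form).

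The gap is in the step you yourself flag as the main obstacle. You assert that $\varepsilon_{k-r-1}$ is a function of noises indexed strictly before $k-r-1$, ``by induction on the recursion.'' That induction fails for $r\geq 1$. Shifting the error recursion back one step gives
\begin{align}
\varepsilon_{k-r-1}=(A-L_{k-1}CA^{r+1})\varepsilon_{k-r-2}+(I-L_{k-1}CA^{r})w_{k-r-2}-L_{k-1}CA^{r-1}w_{k-r-1}-\cdots-L_{k-1}Cw_{k-2}-L_{k-1}v_{k-1},\nonumber
\end{align}
so $\varepsilon_{k-r-1}$ explicitly contains $w_{k-r-1},\dots,w_{k-2}$ and $v_{k-1}$; the estimate $\hat x_{k-r-1\mid k-1}$ uses measurements up to time $k-1$ and therefore depends on process noise up to time $k-2$. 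Consequently $\mathbb{E}[\varepsilon_{k-r-1}w_{j}^{\rmT}]\neq 0$ for $j=k-r-1,\dots,k-2$ (for instance $\mathbb{E}[\varepsilon_{k-r-1}w_{k-r-1}^{\rmT}]=-L_{k-1}CA^{r-1}Q_{k-r-1}$), and the cross terms between $(A-L_kCA^{r+1})\varepsilon_{k-r-1}$ and the noise terms $w_{k-r-1},\dots,w_{k-2}$ do not vanish. Your argument is airtight only for $r=0$, where the noises entering the current step ($w_{k-1}$, $v_k$) genuinely postdate everything in $\varepsilon_{k-1}$. For $r\geq 1$ the ``collect the diagonal contributions'' step needs either an explicit accounting of these cross--covariances or an argument for why they can be neglected; to be fair, the paper's own one--line proof glosses over exactly the same point, so you have in effect reproduced its reasoning together with its unstated assumption.
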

{\bf Proof.} The proof follows by substituting (\ref{eq:error_r}) in (\ref{ecmg})
and using (\ref{cond_r}). \qed\\

Next, define the cost function $J$ as the trace of the error covariance matrix
\begin{align}
      J(L_{k}) =
\mathrm{tr}\mathbb{E}[\varepsilon_{k-r}\varepsilon^{\mathrm{T}}_{k-r}] =
\mathrm{tr}P_{k-r\mid k}. \label{cost}
\end{align}
Therefore, it follows from (\ref{eq:Pgd}) that
\begin{eqnarray}
\lefteqn{J(L_{k}) = \mathrm{tr}\big[ (A-L_{k}CA^{r+1})P_{k-r-1|k-1}
(A-L_{k}CA^{r+1})^\rmT } 
{}\nonumber\\
&& {}+ (I-L_{k}CA^r)Q_{k-r-1}(I-L_{k}CA^r)^\rmT +
(L_{k}CA^{r-1})Q_{k-r}(L_{k}CA^{r-1})^\rmT+ {}\nonumber\\
 && {} \hdots +(L_{k}C)Q_{k-1}(L_{k}C)^\rmT+
L_{k}R_{k}L_{k}^\rmT \big].\label{cost_r}
\end{eqnarray}

To derive the unbiased minimum-variance filter gain, we minimize the
objective function (\ref{cost_r}) subject to the constraints
(\ref{cond_r}) while noting that from Corollary \ref{cor_r}, we have 
rank$(CA^dH) \leq l-p$.

\begin{thm}\label{Lthm} Suppose there exists at least one $L_k$ that satisfies
(\ref{cond_r}), then the unbiased minimum-variance gain $L_{k}$ is
\begin{eqnarray}
L_{k}= \left[ T_{k}A^{r^\rmT} C^\rmT +
N_kZ_k^{\dagger}\big[CA^rH\,\,\,\,CA^{r-1}H\,\,\,\,
\hdots\,\,\,\,CH\big]^\rmT \right]
S_{k}^{-1}. \label{Minvar_gain}
\end{eqnarray}
where
\begin{align*}
T_{k} & \isdef Q_{k-r-1}+AP_{k-r-1|k-1}A^{\rmT}, \\
S_{k} & \isdef CA^rT_{k}A^{r^\rmT} C^\rmT
+CA^{r-1}Q_{k-r}A^{{r-1}^\rmT}C^\rmT+\hdots+ CQ_{k-1}C^\rmT + R_{k},
\end{align*}
\begin{align*}
 Z_k=
\matl{cccc}(CA^rH)^\rmT S_{k}^{-1}(CA^rH)&(CA^rH)^\rmT S_{k}^{-1}(CA^{r-1}
H)&\hdots&(CA^rH)^\rmT S_{k}^{-1}(CH)\\(CA^{r-1}H)^\rmT S_{k}^{-1}
(CA^rH)&(CA^{r-1}H)^\rmT S_{k}^{-1}(CA^{r-1}
H)&\hdots&(CA^{r-1}H)^\rmT S_{k}^{-1}(CH)\\\vdots&\vdots&\ddots&\vdots\\
(CH)^\rmT S_{k}^{-1}(CA^rH)&(CH)^\rmT S_{k}^{-1}(CA^{r-1}H)
&\hdots&(CH)^\rmT S_{k}^{-1}(CH)\matr,
\end{align*}
and
\begin{align*}
N_k&=\matl{cccc}H-T_{k}A^{r^\rmT}C^\rmT S_{k}^{-1}CA^rH
&-T_{k}A^{r^\rmT}C^\rmT S_{k}^{-1}CA^{r-1}H &\hdots&-T_{k}A^{r^\rmT}C^\rmT
S_{k}^{-1}CH\matr.
\end{align*}
\end{thm}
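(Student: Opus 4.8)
The plan is to treat this as minimization of the convex quadratic $J(L_k)$ over the matrix variable $L_k$ subject to the linear equality constraints (\ref{cond_r}), solved by the matrix Lagrange-multiplier (KKT) method. The first step is to compress $J$. Using $CA^{r+1}=(CA^r)A$ we have $A-L_kCA^{r+1}=(I-L_kCA^r)A$, so the first two summands in (\ref{eq:Pgd})/(\ref{cost_r}) combine into $(I-L_kCA^r)(AP_{k-r-1|k-1}A^\rmT+Q_{k-r-1})(I-L_kCA^r)^\rmT=(I-L_kCA^r)T_k(I-L_kCA^r)^\rmT$. Expanding and collecting every term quadratic in $L_k$ (the $CA^rT_kA^{r\rmT}C^\rmT$ piece together with $CA^{r-1}Q_{k-r}A^{(r-1)\rmT}C^\rmT,\ldots,CQ_{k-1}C^\rmT,R_k$) gives $S_k$, so that $J(L_k)=\tr T_k-2\tr[L_kCA^rT_k]+\tr[L_kS_kL_k^\rmT]$; here I would note that $S_k$ is assumed invertible (e.g.\ if $R_k\succ0$ then $S_k\succ0$), which also makes $J$ strictly convex in $L_k$. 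I would then rewrite (\ref{cond_r}) as the single equation $L_k\mathcal{M}=\mathcal{N}$ with $\mathcal{M}\isdef[\,CA^rH\;\;CA^{r-1}H\;\;\cdots\;\;CH\,]$ and $\mathcal{N}\isdef[\,H\;\;0\;\;\cdots\;\;0\,]$.

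Next I would introduce a matrix multiplier $\Lambda$ and form $\mathcal{L}(L_k,\Lambda)=J(L_k)+2\tr[\Lambda^\rmT(L_k\mathcal{M}-\mathcal{N})]$; since the problem is a convex quadratic with affine constraints and (by hypothesis) a nonempty feasible set, stationarity is necessary and sufficient for optimality. Setting $\partial\mathcal{L}/\partial L_k=0$ yields $-2T_kA^{r\rmT}C^\rmT+2L_kS_k+2\Lambda\mathcal{M}^\rmT=0$, hence $L_k=(T_kA^{r\rmT}C^\rmT-\Lambda\mathcal{M}^\rmT)S_k^{-1}$. Substituting into $L_k\mathcal{M}=\mathcal{N}$ and rearranging gives $\Lambda(\mathcal{M}^\rmT S_k^{-1}\mathcal{M})=T_kA^{r\rmT}C^\rmT S_k^{-1}\mathcal{M}-\mathcal{N}$, and one then recognizes, block by block, that $\mathcal{M}^\rmT S_k^{-1}\mathcal{M}=Z_k$ and that the right-hand side equals $-N_k$. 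Thus $\Lambda Z_k=-N_k$, so $\Lambda=-N_kZ_k^{\dagger}$, and back-substitution produces exactly (\ref{Minvar_gain}).

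The delicate point --- and what I expect to be the main obstacle --- is that $Z_k$ need not be invertible: by Corollary \ref{cor_r}(iv) the blocks $CA^dH$ with $d<r$ are rank-deficient in the genuinely non-square case, so $\mathcal{M}$ and hence $Z_k$ may be rank-deficient and $Z_k^{-1}$ is unavailable. I would resolve this in two steps. First, consistency of $\Lambda Z_k=-N_k$: this follows from the standing hypothesis that at least one feasible $L_k$ exists, which makes the KKT system solvable, so the rows of $N_k$ lie in the range of $Z_k$ and $N_kZ_k^{\dagger}Z_k=N_k$; hence $\Lambda=-N_kZ_k^{\dagger}$ is a genuine solution. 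Second, well-posedness: if $\Lambda_1,\Lambda_2$ both solve $\Lambda Z_k=-N_k$ then $(\Lambda_1-\Lambda_2)\mathcal{M}^\rmT S_k^{-1}\mathcal{M}=0$, and pre/post-multiplying and using $S_k\succ0$ forces $(\Lambda_1-\Lambda_2)\mathcal{M}^\rmT=0$, so every solution yields the same $L_k$; the pseudoinverse is merely a convenient representative. Combined with strict convexity of $J$, this shows the $L_k$ in (\ref{Minvar_gain}) is the unique unbiased minimum-variance gain, completing the proof.
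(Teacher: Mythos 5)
Your proposal is correct and follows essentially the same route as the paper: form the Lagrangian for the constrained trace minimization, set the gradient in $L_k$ to zero to express $L_k$ in terms of the multiplier, substitute into the unbiasedness constraint to obtain $\Lambda Z_k=-N_k$, and resolve it with the Moore--Penrose pseudoinverse (your sign on $\Lambda$ differs only because of the sign convention in the constraint term). The one substantive addition is that you explicitly verify consistency of the multiplier equation and show all solutions give the same $L_k$ --- points the paper only asserts in a remark after the proof --- which is a welcome tightening rather than a different method.
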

{\bf Proof.} The Lagrangian for the constrained minimization problem is
\begin{eqnarray}
\lefteqn{\mathcal{L}(L_{k}) \triangleq J(L_{k})}{}\nonumber\\
&&{}+2\mathrm{tr}\big(\big[(I-L_{k}CA^r)H\,\,\,\,
L_{k}CA^{r-1}H\,\,\,\,
\hdots\,\,\,\,L_{k}CH\big]\Lambda_k\big),\label{eq:rd1_Lag_r}
\end{eqnarray}
where $\Lambda_{k}^\rmT \in \BBR^{n \times (r+1)p}$ is the
matrix of Lagrange multipliers.
Next, differentiating (\ref{eq:rd1_Lag_r}) with respect to $L_{k}$ and setting
it equal to zero yields
\begin{eqnarray}
\lefteqn{ -2CA^rQ_{k-r-1} -2CA^{r+1}P_{k-r-1|k-1}A^\rmT +2\big[
CA^{r+1}P_{k-r-1|k-1}A^{{r+1}^\rmT}C^\rmT} {}\nonumber\\
&& +CA^rQ_{k-r-1}A^{r^\rmT} C^\rmT+CA^{r-1}Q_{k-r}A^{{r-1}^\rmT}C^\rmT+\hdots
+CQ_{k-1}C^\rmT +R_{k}\big] L_{k}^\rmT {}\nonumber\\
&&-2\big[CA^rH\,\,\,\,CA^{r-1}H\,\,\,\,
\hdots\,\,\,\,CH\big]\Lambda_k=0. \label{eq:rd_2r}
\end{eqnarray}
Next assuming $S_k$ is invertible and solving (\ref{eq:rd_2r}) for $ L_{k}$, we
get
\begin{align}
L_{k}= \left[ T_{k}A^{r^\rmT} C^\rmT +
\Lambda_{k}^\rmT\big[CA^rH\,\,\,\,CA^{r-1}H\,\,\,\,
\hdots\,\,\,\,CH\big]^\rmT \right]
S_{k}^{-1}.
\label{rd_1Ltrans_r}
\end{align}
Next, to solve for $\Lambda_{k}$, we substitute (\ref{rd_1Ltrans_r}) in
(\ref{cond_r}) to get
\begin{align}
\left[ T_{k}A^{r^\rmT} C^\rmT + \Lambda_{k}^\rmT\big[CA^rH\,\,\,\,
CA^{r-1}H\,\,\,\,\hdots\,\,\,\,CH\big]^\rmT \right]S_{k}^{-1}
\big[CA^rH\,\,\,\,CA^{r-1}H\,\,\,\,\hdots\,\,\,\,CH\big]={}\nonumber\\
\big[H\,\,\,\,0_{n\times p}\,\,\,\,\hdots\,\,\,\,0_{n\times p}\big]. \label{L_p}
\end{align}

Next we define $Z_k$ and $N_k$ as follows
\begin{align}
Z_k=
\matl{cccc}(CA^rH)^\rmT S_{k}^{-1}(CA^rH)&(CA^rH)^\rmT S_{k}^{-1}(CA^{r-1}
H)&\hdots&(CA^rH)^\rmT S_{k}^{-1}(CH)\\(CA^{r-1}H)^\rmT S_{k}^{-1}
(CA^rH)&(CA^{r-1}H)^\rmT S_{k}^{-1}(CA^{r-1}
H)&\hdots&(CA^{r-1}H)^\rmT S_{k}^{-1}(CH)\\\vdots&\vdots&\ddots&\vdots\\
(CH)^\rmT S_{k}^{-1}(CA^rH)&(CH)^\rmT S_{k}^{-1}(CA^{r-1}H)
&\hdots&(CH)^\rmT S_{k}^{-1}(CH)\matr,
\end{align}

\begin{align}
 N_k&=\matl{cccc}H-T_{k}A^{r^\rmT}C^\rmT S_{k}^{-1}CA^rH
&-T_{k}A^{r^\rmT}C^\rmT S_{k}^{-1}CA^{r-1}H &\hdots&-T_{k}A^{r^\rmT}C^\rmT
S_{k}^{-1}CH\matr.
\end{align}
Therefore (\ref{L_p}) becomes
\begin{align}
 \Lambda_k^\rmT Z_k&=N_k.
\end{align}
Solving for $\Lambda_k^\rmT$ we get
\begin{align}
 \Lambda_k^\rmT=N_kZ_k^{\dagger}.\label{value:lambda}
\end{align}
Where $Z_k^{\dagger}$ is a Moore-Penrose generalized inverse of
$Z_k$. 
Substituting (\ref{value:lambda}) in (\ref{rd_1Ltrans_r}) we get 
\begin{align}
     L_{k}= \left[ T_{k}A^{r^\rmT} C^\rmT +
N_kZ_k^{\dagger}\big[CA^rH\,\,\,\,CA^{r-1}H\,\,\,\,
\hdots\,\,\,\,CH\big]^\rmT \right]
S_{k}^{-1}.
\label{rd_2Ltrans_r} \tag*{$\square$}
\end{align}
Note that the assumption of $S_k$ being invertible is ensured by
demanding apriori that $R_k$ is positive definite for all $k$. This
indicates the persistence of sensor noise and is a valid assumption on
physical grounds. $Q_k$ is assumed to be nonnegative definite for all
$k$.  It should also be noted that since $Z_k$ is not full rank there
are an infinitely many possible solutions for $\Lambda_k$. However at
any instant $k$, any $\Lambda_k$ that satisfies (\ref{value:lambda}) will give the
same minimum-variance gain ($L_k$).
\begin{cor}\label{lcor} Suppose $CA^dH = 0$ for $d=0,\,1,\,\hdots,\,r$ 
then the unbiased minimum-variance gain
$L_{k}$ is
\begin{eqnarray}
L_{k}= \left[ T_{k}A^{r^\rmT} C^\rmT + \Phi_{k} (CA^rH)^\rmT \right]
S_{k}^{-1},
\end{eqnarray}
where
\begin{align}
T_{k} & \isdef Q_{k-r-1}+AP_{k-r-1|k-1}A^{\rmT}, \\
S_{k} & \isdef CA^rT_{k}A^{r^\rmT} C^\rmT
+CA^{r-1}Q_{k-r}A^{{r-1}^\rmT}C^\rmT+\hdots+ CQ_{k-1}C^\rmT + R_{k}, \\
\Phi_{k} & \isdef \left[ H - T_{k}A^{r^\rmT} C^\rmT S_{k}^{-1} CA^rH
\right] \left( (CA^rH)^\rmT
S_{k}^{-1} CA^rH\right)^{-1}
\end{align}
\end{cor}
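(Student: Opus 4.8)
The plan is to obtain Corollary \ref{lcor} as a direct specialization of Theorem \ref{Lthm}, rather than re-running the Lagrangian argument. Under the stated hypothesis the matrices $CA^{r-1}H,\dots,CH$ all vanish, while by Corollary \ref{cor_r}(ii) the surviving matrix $CA^rH$ has full column rank $p$; thus the unbiasedness constraints (\ref{cond_r}) collapse, since $L_kCA^{r-1}H=\cdots=L_kCH=0$ hold automatically and the only genuine constraint on $L_k$ is $H-L_kCA^rH=0$. Because $S_k$ and $T_k$ are built from the $Q$'s, $R_k$ and $P_{k-r-1|k-1}$ only, they are unchanged between Theorem \ref{Lthm} and the corollary, so it remains to show the rest of (\ref{Minvar_gain}) simplifies to the asserted formula.

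First I would substitute $CA^dH=0$, $d<r$, into the definitions of $Z_k$ and $N_k$ from Theorem \ref{Lthm}. Every off-diagonal block and every lower block-row of $Z_k$ contains a factor $CA^dH$ with $d<r$ and hence is zero, so $Z_k$ degenerates to the block-diagonal matrix whose only nonzero entry is its $(1,1)$ block $M_k\isdef(CA^rH)^\rmT S_k^{-1}(CA^rH)$; similarly $N_k$ collapses to $[\,H-T_kA^{r^\rmT}C^\rmT S_k^{-1}CA^rH\ \ 0\ \cdots\ 0\,]$. Since $CA^rH$ has full column rank $p$ and $S_k^{-1}$ is positive definite (the standing assumption $R_k\succ0$ makes $S_k$, hence $S_k^{-1}$, well defined and positive definite), $M_k$ is symmetric positive definite and therefore invertible, so in particular $\Phi_k$ is well defined. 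The Moore--Penrose inverse of a block-diagonal matrix whose single nonzero block $M_k$ is square and nonsingular is the block-diagonal matrix with that block replaced by $M_k^{-1}$ and all else zero (immediate from the four defining identities), so $Z_k^\dagger$ has this form. Hence $N_kZ_k^\dagger=[\,(H-T_kA^{r^\rmT}C^\rmT S_k^{-1}CA^rH)M_k^{-1}\ \ 0\ \cdots\ 0\,]=[\,\Phi_k\ \ 0\ \cdots\ 0\,]$, and right-multiplying by $[\,CA^rH\ \ CA^{r-1}H\ \cdots\ CH\,]^\rmT$ kills all but the first block, leaving $\Phi_k(CA^rH)^\rmT$. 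Substituting into (\ref{Minvar_gain}) yields $L_k=[\,T_kA^{r^\rmT}C^\rmT+\Phi_k(CA^rH)^\rmT\,]S_k^{-1}$, which is the claim.

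The only real work is the block-matrix bookkeeping in the two middle steps — in particular verifying that the displayed $Z_k^\dagger$ is genuinely the pseudoinverse of the degenerate $Z_k$, and invoking $\rank(CA^rH)=p$ together with $S_k\succ0$ to see $M_k$ is invertible; everything else is substitution. As a consistency check one can note that in the square case $l=p$, $M_k^{-1}=(CA^rH)^{-1}S_k(CA^rH)^{-\rmT}$ and the formula reduces to $L_k=H(CA^rH)^{-1}$, recovering Lemma \ref{lem:L_r}.

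An alternative, fully self-contained route that avoids pseudoinverses altogether is to repeat the Lagrangian derivation of Theorem \ref{Lthm} from scratch but with the single surviving constraint $H=L_kCA^rH$: the first-order condition (\ref{eq:rd_2r}) then reads as the same equation with $[\,CA^rH\ \cdots\ CH\,]\Lambda_k$ replaced by $CA^rH\,\Lambda_{k,1}$ (the top $p\times n$ block of $\Lambda_k$), solving gives $L_k=[\,T_kA^{r^\rmT}C^\rmT+\Lambda_{k,1}^\rmT(CA^rH)^\rmT\,]S_k^{-1}$, and enforcing $H=L_kCA^rH$ forces $\Lambda_{k,1}^\rmT=\Phi_k$ since $M_k$ is honestly invertible. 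I would present the specialization as the main argument and relegate this direct derivation to a remark.
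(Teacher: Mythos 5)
Your proof is correct and takes the route the paper intends: Corollary \ref{lcor} is stated without proof as an immediate specialization of Theorem \ref{Lthm}, and your block computation — $Z_k$ collapsing to $\mathrm{blockdiag}\big((CA^rH)^\rmT S_k^{-1}CA^rH,\,0,\dots,0\big)$ with that block invertible by $\rank(CA^rH)=p$ and $S_k\succ 0$, $N_k$ collapsing to $[\Phi_k\big((CA^rH)^\rmT S_k^{-1}CA^rH\big)\ \ 0\ \cdots\ 0]$, and the pseudoinverse of the degenerate $Z_k$ being the corresponding block-diagonal matrix — supplies exactly the details the paper leaves implicit, with the reduction to $L_k=H(CA^rH)^{-1}$ in the square case serving as a sound consistency check. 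Note only that you have (rightly) read the hypothesis as $CA^dH=0$ for $d=0,\dots,r-1$; as printed, the range $d=0,\dots,r$ would force $CA^rH=0$ and render $\Phi_k$ undefined, so the stated range is a typo that your argument implicitly corrects.
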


In the next section we present some numerical results using the previously
developed filter.
\section{Existence and maximum delay}
 According to (\ref{cond_r}), $L_k$ should satisfy the condition
 \begin{align}
 L_k\begin{bmatrix}CA^{r}H & CA^{r-1}H & \hdots &
   CH\end{bmatrix}=\begin{bmatrix}H & 0 &\hdots &0 
 \end{bmatrix}\label{existence_L}.
 \end{align}

\begin{lem}
$L_k\begin{bmatrix}CA^rH & CA^{r-1}H \hdots
  CH\end{bmatrix}\neq\begin{bmatrix}H & 0 & \hdots & 0\end{bmatrix} \;\;
\forall\;\;r \geq n$
\end{lem}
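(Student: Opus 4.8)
The plan is to derive a contradiction via the Cayley--Hamilton theorem. First I would unpack (\ref{existence_L}) block-column by block-column: a gain $L_k$ satisfies it precisely when $L_k CA^r H = H$ while $L_k CA^d H = 0$ for every $d \in \{0, 1, \ldots, r-1\}$. The key observation is that when $r \geq n$, the index set $\{0, 1, \ldots, n-1\}$ sits inside $\{0, 1, \ldots, r-1\}$, so in particular $L_k CA^d H = 0$ for all $d = 0, \ldots, n-1$.

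Next I would invoke the Cayley--Hamilton theorem: since $A$ satisfies its own characteristic polynomial (of degree $n$), every power $A^r$ with $r \geq n$ is a linear combination $A^r = \sum_{j=0}^{n-1}\alpha_j A^j$ of the lower powers $I, A, \ldots, A^{n-1}$, for suitable scalars $\alpha_j$ depending on $r$. Left-multiplying this identity by $C$, right-multiplying by $H$, and then left-multiplying by $L_k$ yields
\begin{align}
L_k CA^r H = \sum_{j=0}^{n-1}\alpha_j\, L_k CA^j H = 0, \nonumber
\end{align}
using the annihilation conditions just noted. But (\ref{existence_L}) also demands $L_k CA^r H = H$, and $H \neq 0$ because rank$(H) = p \geq 1$; this is the desired contradiction, so no admissible $L_k$ exists for $r \geq n$.

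I expect this argument to go through cleanly --- the only place needing a moment's care is the index bookkeeping, namely verifying that $r \geq n$ (and not some weaker bound) is exactly the threshold that makes the full block set $CA^0 H, \ldots, CA^{n-1}H$ available among the ``zeroed'' constraints so that it can express $CA^r H$. If one wanted a sharper, system-dependent statement, the degree of the characteristic polynomial could be replaced by that of the minimal polynomial of $A$, but $r < n$ is the clean uniform bound the lemma records, and it also pins down the maximum admissible reconstruction delay claimed in the surrounding discussion.
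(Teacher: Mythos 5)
Your proof is correct and follows essentially the same route as the paper: assume an admissible $L_k$ exists, use Cayley--Hamilton to write $CA^rH$ as a combination of the lower Markov parameters $CA^jH$, $j=0,\dots,n-1$, all of which $L_k$ must annihilate, and conclude $H = L_kCA^rH = 0$, a contradiction. If anything, your version is slightly tidier than the paper's, since you handle every $r \geq n$ uniformly (via the fact that $A^r$ lies in the span of $I, A, \dots, A^{n-1}$ for all such $r$) rather than treating only $r=n$ explicitly and asserting the general case.
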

{\bf Proof.}
Suppose on the contrary, assume $L_k\begin{bmatrix}CA^rH & CA^{r-1}H \hdots
  CH\end{bmatrix}=\begin{bmatrix}H & 0 & \hdots & 0\end{bmatrix} \;\;
\forall\;\;r \geq n.$ Let $r=n.$ The condition for unbiasedness
becomes  
\begin{align}
L_k\begin{bmatrix}CA^nH & CA^{n-1}H \hdots
  CH\end{bmatrix}=\begin{bmatrix}H & 0 & \hdots & 0\end{bmatrix}\label{lemma:eq1}
\end{align}
Using Cayley-Hamilton theorem and after rearranging \eqref{lemma:eq1} becomes
\begin{align}
\begin{bmatrix}(d_1L_kCA^{n-1}H+d_2L_kCA^{n-2}H+\hdots+d_nL_kCH) & L_kCA^{n-1}H \hdots
  L_kCH\end{bmatrix}=\begin{bmatrix}H & 0 & \hdots & 0\end{bmatrix}, 
\end{align}
which results in a contradiction, which is also true for all $r > n$. Hence our assumption is wrong and
therefore 
\begin{align}
L_k\begin{bmatrix}CA^rH & CA^{r-1}H \hdots
  CH\end{bmatrix}\neq\begin{bmatrix}H & 0 & \hdots & 0\end{bmatrix} \;\;
\forall\;\;r > n \tag*{$\square$}
\end{align}
This shows that the choice of the delay, $r$, cannot be arbitrary and
has to be smaller than the system order. The lower bound on the delay
is determined by the first Markov parameter which has full rank. \newline
\begin{con}
The filter will be unbiased for only one value of the delay $r$,
i.e. there exists only one value of $r$ that satisfies condition
\eqref{existence_L}. 
\end{con}
Next, we introduce the following notations 
  \begin{align*}
  S_r&\isdef\begin{bmatrix}CA^{r}H & CA^{r-1}H & \hdots &
    CH\end{bmatrix}, \\
  S_{r-1}&\isdef\begin{bmatrix}CA^{r-1}H & CA^{r-2}H & \hdots &
    CH\end{bmatrix}.
 \end{align*}
\begin{thm}\label{thm:existence}
There exists a matrix $L_k$ that satisfies (\ref{existence_L}) if and
only if 
\begin{align}
\mathrm{rank}\begin{bmatrix}S_r\end{bmatrix}-\mathrm{rank}\begin{bmatrix}S_{r-1}
\end{bmatrix}=p
\end{align}
\end{thm}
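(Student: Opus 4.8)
The plan is to observe that (\ref{existence_L}) is a single linear matrix equation in the unknown $L_k$, and that the block partition $S_r=\big[\,CA^rH \;\big|\; S_{r-1}\,\big]$ (with the right-hand side $\big[\,H\;\big|\;0\,\big]$ partitioned conformally) makes it equivalent to the pair of conditions
$L_k CA^rH = H$ and $L_k S_{r-1}=0$.
The second condition says exactly that every row of $L_k$ lies in the left null space of $S_{r-1}$, i.e. $L_k$ annihilates the column space $\mathrm{col}(S_{r-1})$. So I would first record this reformulation and also the identity $\mathrm{col}(S_r)=\mathrm{col}(CA^rH)+\mathrm{col}(S_{r-1})$, which gives $\mathrm{rank}(S_r)-\mathrm{rank}(S_{r-1})=\dim\big(\mathrm{col}(CA^rH)+\mathrm{col}(S_{r-1})\big)-\dim\big(\mathrm{col}(S_{r-1})\big)$; this quantity is the number of columns of $CA^rH$ that are linearly independent modulo $\mathrm{col}(S_{r-1})$, and it is always at most $p$. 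Both implications are then dimension counts.

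For the ``only if'' direction I would assume such an $L_k$ exists and rule out $\mathrm{rank}(S_r)-\mathrm{rank}(S_{r-1})<p$. If the gap were strictly less than $p$, the $p$ columns of $CA^rH$ would be linearly dependent modulo $\mathrm{col}(S_{r-1})$, so there is a nonzero $c\in\BBR^{p}$ with $CA^rHc\in\mathrm{col}(S_{r-1})$. Applying $L_k$ and using $L_k S_{r-1}=0$ gives $L_kCA^rHc=0$, while $L_kCA^rH=H$ forces $L_kCA^rHc=Hc$; hence $Hc=0$, contradicting $\mathrm{rank}(H)=p$. Therefore the gap must equal $p$.

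For the ``if'' direction I would construct $L_k$ explicitly. Let $V$ be a full-column-rank matrix whose columns form a basis of $\mathrm{col}(S_{r-1})$ (so $V\in\BBR^{l\times q}$ with $q=\mathrm{rank}(S_{r-1})$). The rank hypothesis says the columns of $CA^rH$ are independent modulo $\mathrm{col}(S_{r-1})$, so $M\isdef\big[\,CA^rH\;\big|\;V\,\big]$ has full column rank $p+q\le l$ and hence a left inverse; let $M_1\in\BBR^{p\times l}$ be the top $p$ rows of any left inverse, so that $M_1 CA^rH=I_p$ and $M_1 V=0$. Setting $L_k\isdef HM_1$ then yields $L_k CA^rH = HM_1CA^rH = H$ and $L_k V = HM_1V = 0$, and since each column of $S_{r-1}$ lies in $\mathrm{col}(V)$ we also get $L_k S_{r-1}=0$; thus $L_k S_r=\big[\,H\;\big|\;0\,\big]$, establishing (\ref{existence_L}).

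I expect the main obstacle to be expository rather than technical: stating the ``modulo $\mathrm{col}(S_{r-1})$'' bookkeeping precisely and tying it cleanly to both the solvability of $L_k CA^rH=H$ under the constraint that $L_k$ vanish on $\mathrm{col}(S_{r-1})$ and the existence of a left inverse of $M$. Once that dictionary is in place, neither direction requires more than the rank identity above and the full-column-rank property of $H$.
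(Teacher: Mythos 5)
Your proof is correct, but it takes a genuinely different route from the paper's. The paper invokes the standard solvability criterion for the linear matrix equation $L_kS_r=[\,H\;\;0\;\;\cdots\;\;0\,]$: a solution exists if and only if the right-hand side lies in the row space of $S_r$, i.e.\ if and only if stacking the row block $[\,H\;\;0\;\;\cdots\;\;0\,]$ under $S_r$ does not increase the rank. It then evaluates the rank of that augmented matrix with the identity $\mathrm{rank}[\,A\;\;B\,]=\mathrm{rank}(A)+\mathrm{rank}(B)-\dim\bigl(\mathcal{R}(A)\cap\mathcal{R}(B)\bigr)$, applied to the column partition whose first block is $\bigl[\begin{smallmatrix}CA^rH\\ H\end{smallmatrix}\bigr]$: this block has rank $p$ because $\mathrm{rank}(H)=p$, and its column space meets that of $\bigl[\begin{smallmatrix}S_{r-1}\\ 0\end{smallmatrix}\bigr]$ trivially (again because $H$ has full column rank), so the augmented rank is $\mathrm{rank}(S_{r-1})+p$ and the criterion collapses to the stated rank gap. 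You instead split (\ref{existence_L}) into the pair $L_kCA^rH=H$, $L_kS_{r-1}=0$, prove necessity by producing a nonzero $c$ with $Hc=0$ whenever the gap is below $p$, and prove sufficiency by explicitly constructing $L_k=HM_1$ from a left inverse of $[\,CA^rH\;|\;V\,]$. The two arguments rest on the same underlying fact --- the gap equals the number of columns of $CA^rH$ that are independent modulo $\mathrm{col}(S_{r-1})$, hence is at most $p$, with equality forcing both $\mathrm{rank}(CA^rH)=p$ and trivial intersection with $\mathrm{col}(S_{r-1})$ --- but the paper's version is shorter and purely existential, while yours is constructive (it hands you a concrete unbiased gain, not merely a certificate that one exists) and makes the quotient-space mechanism behind the rank condition explicit. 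Both are complete; your sufficiency construction also quietly re-derives the bound $\mathrm{rank}(S_r)\le l$ that underlies Corollary \ref{cor_r}.
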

{\bf Proof.}
There exists a matrix $L_k$ that satisfies (\ref{existence_L}) if
 and only if the matrix $\begin{bmatrix}H & 0 & \dots &
  0\end{bmatrix}$ is in the space spanned by the rows of $S_r$. This
is equivalent to the condition 
\begin{align}
\mathrm{rank}
\begin{bmatrix*}S_r\\
H\;\;\; 0\;\;\; \hdots\;\;\; 0 \end{bmatrix*}
=\mathrm{rank} \begin{bmatrix}S_r\end{bmatrix}\label{ex1}
\end{align}.
\begin{align*}
\mathrm{rank}
\left[\begin{array}{c|ccc}
CA^rH & CA^{r-1}H & \hdots & CH\\
H & 0 & \hdots & 0
\end{array}\right]
\end{align*}

Using the matrix identity 
\begin{align}
\mathrm{rank}\begin{bmatrix}A &
B\end{bmatrix}=\mathrm{rank}\begin{bmatrix}A\end{bmatrix}+\mathrm{rank}\begin{bmatrix}B\end{bmatrix}-\mathrm{dim}\Big(\mathcal{R}\begin{bmatrix}A\end{bmatrix}\cap\mathcal{R}\begin{bmatrix}B\end{bmatrix}\Big),\label{matrix.identity:con}
\end{align}
and noting that $\mathrm{rank}[H]=p$ we get
\begin{align}
\mathrm{rank}
\begin{bmatrix*}S_r\\
H\;\;\; 0\;\;\; \hdots\;\;\;
0 \end{bmatrix*}=\mathrm{rank}\begin{bmatrix}S_{r-1}\end{bmatrix} + p \label{ex2}.
\end{align}
From (\ref{ex1}) and (\ref{ex2}) we get the condition
\begin{align}
\mathrm{rank}\begin{bmatrix}S_r\end{bmatrix}=\mathrm{rank}\begin{bmatrix}S_{r-1}\end{bmatrix}
  + p. \label{ex3} \tag*{$\square$}
\end{align}
\begin{cor}\label{existence:cor}
If
$\mathrm{rank}\begin{bmatrix}S_r\end{bmatrix}-\mathrm{rank}\begin{bmatrix}S_{r-1}\end{bmatrix}=p$
then
$\mathrm{rank}\begin{bmatrix}M_r\end{bmatrix}-\mathrm{rank}\begin{bmatrix}M_{r-1}\end{bmatrix}=p,$
where 
\begin{align*}M_r=
\begin{bmatrix}
CH & 0 & 0 & \hdots  & 0\\
CAH & CH & 0 & \hdots & 0\\
\vdots & \vdots & \vdots & \ddots & \vdots\\
CA^{r}H & CA^{r-1}H & CA^{r-2}H & \hdots & 0
\end{bmatrix}.
\end{align*}
\end{cor}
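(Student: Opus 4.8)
The plan is, in the spirit of Theorem~\ref{thm:existence}, to relate the rank jump $\rank M_r-\rank M_{r-1}$ to a subspace of the row space of $M_r$, to exploit that $S_r$ is literally the bottom block-row of $M_r$, and then to feed in the hypothesis via the equivalence already established in Theorem~\ref{thm:existence}.

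First I would set up the block-Toeplitz structure: for $i=0,1,\dots,r$ the $i$-th block-row of $M_r$ equals $\begin{bmatrix}S_i&0&\cdots&0\end{bmatrix}$, i.e.\ the block-row $S_i$ followed by $r-i$ zero blocks; in particular the bottom block-row of $M_r$ is $S_r$, and every row of $M_{r-1}$ padded by one extra zero block is a row of $M_r$. Let $\pi$ be the linear map on $\BBR^{(r+1)p}$ deleting the first $p$ coordinates. Stripping the leading block off $\begin{bmatrix}S_i&0&\cdots&0\end{bmatrix}$ gives $\begin{bmatrix}S_{i-1}&0&\cdots&0\end{bmatrix}$ for $i\ge1$ and annihilates the $i=0$ row, so $\pi$ maps the row space of $M_r$ onto the row space of $M_{r-1}$; its kernel is
\begin{align}
\SZ\isdef\big\{\begin{bmatrix}\eta^\rmT H&0&\cdots&0\end{bmatrix}:\eta\in\BBR^n\big\},\nonumber
\end{align}
the row space of $\begin{bmatrix}H&0&\cdots&0\end{bmatrix}$, which has dimension $p$ since $\rank H=p$. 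Rank--nullity for $\pi$ restricted to the row space of $M_r$ (image: the row space of $M_{r-1}$; kernel: $(\text{row space of }M_r)\cap\SZ$) then gives, with no hypothesis,
\begin{align}
\rank M_r-\rank M_{r-1}=\dim\big[(\text{row space of }M_r)\cap\SZ\big]\le p.\nonumber
\end{align}

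Then I would invoke the hypothesis. By the argument in the proof of Theorem~\ref{thm:existence}, $\rank S_r-\rank S_{r-1}=p$ holds iff there is an $L$ with $L\begin{bmatrix}CA^rH&CA^{r-1}H&\cdots&CH\end{bmatrix}=\begin{bmatrix}H&0&\cdots&0\end{bmatrix}$, that is $LCA^rH=H$ and $LCA^dH=0$ for $d=0,\dots,r-1$. Fixing such an $L$: every row $\lambda^\rmT$ of $L$ satisfies $\lambda^\rmT CA^dH=0$ for $0\le d\le r-1$, so left-multiplying the bottom block-row $\begin{bmatrix}CA^rH&CA^{r-1}H&\cdots&CH\end{bmatrix}$ of $M_r$ by $\lambda^\rmT$ produces $\begin{bmatrix}\lambda^\rmT CA^rH&0&\cdots&0\end{bmatrix}$, a vector of $\SZ$ lying in the row space of $M_r$. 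As $\lambda^\rmT$ runs over the rows of $L$, the vectors $\lambda^\rmT CA^rH$ span the row space of $LCA^rH=H$, which is all of $\BBR^p$; hence $\SZ$ lies entirely in the row space of $M_r$, so $(\text{row space of }M_r)\cap\SZ=\SZ$. Substituting into the displayed identity gives $\rank M_r-\rank M_{r-1}=\dim\SZ=p$.

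The only step that needs real care is the structural claim --- that the $i$-th block-row of $M_r$ is $\begin{bmatrix}S_i&0&\cdots&0\end{bmatrix}$, and hence that $\pi$ carries the row space of $M_r$ exactly onto that of $M_{r-1}$. This is routine but slightly fiddly block-Toeplitz bookkeeping (tracking which blocks of each row survive the projection), and I expect it to be the main obstacle; everything downstream is a short chain of subspace inclusions resting on Theorem~\ref{thm:existence}.
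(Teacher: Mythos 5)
Your proof is correct, and it takes a genuinely different (essentially dual) route from the paper's. Both arguments exploit the same structural fact --- deleting the first block column of $M_r$ leaves a copy of $M_{r-1}$ padded with a zero block row, so the rank jump is controlled entirely by that first block column --- but they quantify it on opposite sides. The paper works with \emph{column} spaces: it applies the dimension formula $\mathrm{rank}[A\;\,B]=\mathrm{rank}(A)+\mathrm{rank}(B)-\dim(\mathcal{R}(A)\cap\mathcal{R}(B))$ to the partition of $M_r$ into its first block column and the remainder, and must then show the resulting intersection is \emph{zero}; it does so by applying the same identity to $S_r=[\,CA^rH\;\;S_{r-1}\,]$ to obtain $\mathcal{R}(CA^rH)\cap\mathcal{R}(S_{r-1})=\{0\}$, combined with $\mathrm{rank}(CA^rH)=p$ from Corollary \ref{cor_r}. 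You instead work with \emph{row} spaces: rank--nullity for the coordinate projection $\pi$ converts the rank jump into $\dim\big(\mathrm{rowspace}(M_r)\cap\ker\pi\big)$, and you show this intersection is \emph{full} by invoking the gain $L$ furnished by Theorem \ref{thm:existence}, since $LS_r=[\,H\;\,0\;\cdots\;0\,]$ places all of $\ker\pi$ inside the row space of the bottom block row $S_r$. Your version buys a few things: it uses $L$ directly as a certificate instead of re-deriving rank facts about $CA^rH$; it does not need Corollary \ref{cor_r} $ii)$; and it sidesteps the bookkeeping in the paper's (\ref{M:identity}) and (\ref{inter:2}), where the intersection terms should in fact carry a minus sign (harmless there because they vanish, but your route avoids the issue altogether). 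The block-Toeplitz step you flag as delicate does go through: block row $i$ of $M_r$ is $[\,S_i\;\,0\;\cdots\;0\,]$ with $r-i$ trailing zero blocks, and $\pi$ carries it to block row $i-1$ of $M_{r-1}$ (annihilating the $i=0$ row), which is precisely the paper's representation (\ref{inter:0}) read row-wise. (Both proofs tacitly read the bottom-right block of $M_r$ as $CH$ rather than the $0$ printed in the statement; given the Toeplitz pattern and (\ref{inter:0}), that $0$ is evidently a typographical slip.)
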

\begin{proof}
Using (\ref{matrix.identity:con}) we have
\begin{align}
\mathrm{rank}\begin{bmatrix}M_r\end{bmatrix}-\mathrm{rank}\begin{bmatrix}M_{r-1}\end{bmatrix}=\mathrm{rank}\begin{bmatrix}
CH\\CAH\\\vdots\\CA^{r}H\end{bmatrix}+\mathrm{dim}\Big(\mathcal{R}\begin{bmatrix}CH\\CAH\\\vdots\\CA^{r}H\end{bmatrix}\cap\mathcal{R}\begin{bmatrix}M_{r-1}\end{bmatrix}\Big),\label{M:identity}
\end{align}
where $M_{r-1}$ can also be represented as follows
\begin{align}
\begin{bmatrix}M_{r-1}\end{bmatrix}=
\left[\begin{array}{cccc}
0 & 0 & \hdots & 0\\
CH & 0 & \hdots & 0\\
CAH & CH & \hdots & 0\\
\vdots & \vdots & \vdots & \vdots\\
\hline\\
\multicolumn{4}{c}{\raisebox{.5\normalbaselineskip}{$S_{r-1}$}}
\end{array}\right].\label{inter:0}
\end{align}
Since, 
$\mathrm{rank}\begin{bmatrix}S_r\end{bmatrix}-\mathrm{rank}\begin{bmatrix}S_{r-1}\end{bmatrix}=p$
according the Theorem \ref{thm:existence}, there exists a matrix $L_k$
such that (\ref{existence_L}) and hence (\ref{cond_r}) is
satisfied. Then from Corollary \ref{cor_r}, $ii)$ we have
$\mathrm{rank}(CA^{r}H)=p.$ Therefore
\begin{align}
\mathrm{rank}\begin{bmatrix}
CH\\CAH\\\vdots\\CA^{r}H\end{bmatrix}=p.\label{inter:1} 
\end{align} 
Next, Using (\ref{matrix.identity:con}) we have
\begin{align}
\mathrm{rank}\begin{bmatrix}S_r\end{bmatrix}-\mathrm{rank}\begin{bmatrix}S_{r-1}\end{bmatrix}=\mathrm{rank}\begin{bmatrix}CA^rH\end{bmatrix}+\mathrm{dim}\Big(\mathcal{R}\begin{bmatrix}CA^rH\end{bmatrix}\cap\mathcal{R}\begin{bmatrix}S_{r-1}\end{bmatrix}\Big).\label{inter:2}
\end{align}
Now since,
$\mathrm{rank}\begin{bmatrix}S_r\end{bmatrix}-\mathrm{rank}\begin{bmatrix}S_{r-1}\end{bmatrix}=p$
and $\mathrm{rank}(CA^rH)=p$, (\ref{inter:2}) leads to 
\begin{align}
\mathrm{dim}\Big(\mathcal{R}\begin{bmatrix}CA^rH\end{bmatrix}\cap\mathcal{R}\begin{bmatrix}S_{r-1}\end{bmatrix}\Big)=0.\label{inter:3}
\end{align}
From (\ref{M:identity}), (\ref{inter:0}) and (\ref{inter:3}) we have
\begin{align}
\mathrm{dim}\Big(\mathcal{R}\begin{bmatrix}CH\\CAH\\\vdots\\CA^{r}H\end{bmatrix}\cap\mathcal{R}\begin{bmatrix}M_{r-1}\end{bmatrix}\Big)=0. \label{inter:4}
\end{align}
Next, substituting (\ref{inter:1}) and (\ref{inter:4}) in
(\ref{M:identity}) leads to 
\begin{align*}
\mathrm{rank}\begin{bmatrix}M_r\end{bmatrix}-\mathrm{rank}\begin{bmatrix}M_{r-1}\end{bmatrix}=p.
\end{align*}

\end{proof}


This in turn implies that the system (\ref{x}) and (\ref{y}) is
$r$-delay invertible \cite{willsky74}. However it should be noted that the $r$-delay invertibility of a
system is not a sufficient condition for the existence of $L_k$ which
satisfies the condition (\ref{existence_L}) as the following example
shows.
Consider the system
\begin{align*}A=
\begin{bmatrix}
0.5 & -0.6 & 0 & 0\\0.5 & 0 & 0 & 0\\0 & 0 & -0.5 & -0.6\\0 & 0 & 0.5
& 0
\end{bmatrix},\,H=\begin{bmatrix}4 & 0\\0 & 0\\ 0 & 4\\0 &
  0\end{bmatrix},\,C=\begin{bmatrix}
0.25 & 1.05 & 0.25 & 1.1 \\ 0.25 & 1.15 & 0.25 & 1\\0.25 & 1.05 & 0.25 &1.1
\end{bmatrix}.
\end{align*}
For this system $\mathrm{rank}(M_r)-\mathrm{rank}(M_{r-1})=2=p$, 
however $\mathrm{rank}(S_r)-\mathrm{rank}(S_{r-1}) < p.$\newline 


\section{ Numerical Results Using an Unbiased Filter with Delay of One Time
Step}

\subsection{Numerical Results - Square Systems} \label{comp-example}
\begin{figure}[ht!]
\centering{
\includegraphics[width=5cm]{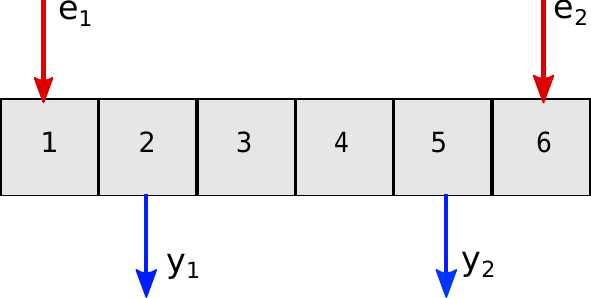}}
\caption{This compartmental model comprises of six compartments that exchange
mass or energy through mutual interaction. The inputs are provided to the first
and the last compartment while the states from the second and the fifth
compartment are measured and constitute the outputs.}
\label{fig:cm}
\end{figure}
To illustrate recursive input reconstruction, we consider a
compartmental system comprised of $n$ compartments that exchange mass
or energy through mutual interaction. This can represent physical
models like collection of rooms which mutually exchange mass and energy.
The conservation equations governing the compartmental model are
\begin{align}
x_{1,k+1} & =  x_{1,k} - \beta x_{1,k} + \alpha
(x_{2,k} -
x_{1,k}), \label{eq:basic_compartmental1}\\
x_{i,k+1} & =  x_{i,k} - \beta x_{i,k} + \alpha
(x_{i+1,k} - x_{i,k}) -\alpha (x_{i,k} - x_{i-1,k}),\nonumber \\& \quad i =
2,\,\dots,\, n-1,\label{eq:basic_compartmental}\\
x_{n,k+1} & = x_{n,k} - \beta x_{n,k} -\alpha
(x_{n,k} - x_{n-1,k}), \label{eq:basic_compartmental3}
\end{align}
where $0 < \beta < 1$ is the loss coefficient and $0 < \alpha < 1$
is the flow coefficient.
Figure \ref{fig:cm} illustrates a schematic of the compartmental model wherein
each block represents a compartment. The arrows labeled $e_1$, $e_2$ indicate the input to the
system while the arrows labeled $y_1$, $y_2$ indicate the output. 
The system equations
(\ref{eq:basic_compartmental1}) - (\ref{eq:basic_compartmental3}) can be written
in the state space form (\ref{x}), (\ref{y}) with
\begin{align}
A & = \matl{ccccc} 1-\beta-\alpha & \alpha & 0 & \cdots & 0 \\
    \alpha & 1-\beta-\alpha & \alpha & \cdots & 0 \\
    \vdots & & \ddots & \ddots & \vdots \\
    0 & \cdots & 0 & \alpha & 1-\beta-\alpha \matr. \label{eq:ex_A}
\end{align}

Further, for the numerical simulations, we choose $n=6$, $\alpha = 0.1$ and
$\beta = 0.1$, we assume we have no known inputs and therefore the $Bu_k$ and
$Du_k$ terms disappear, and we assume two unknown inputs enter compartments 1
and 6, while the states in compartments 2 and 5 are measured as outputs. It then
follows that
\begin{align}
H = \matl{cccccc} 1 & 0 & 0 & 0 & 0 & 0 \\ 0 & 0 & 0 & 0 & 0 & 1
\matr^\rmT, \quad C & =
\matl{cccccc} 0 & 1 & 0 & 0 & 0 & 0 \\
    0 & 0 & 0 & 0 & 1 &  0 \matr. \label{eq:ex_comp2}
\end{align}
Note that since $CH=0,$ the filters in
\cite{kitanidis,steven_kitanidis,darouach94,palanthACC2006} cannot be applied
for input reconstruction (or state estimation). Further, since $CAH$ is full
rank and there are no invariant zeros of the system, it follows from Theorem
\ref{thm:suff_r} that the filter  (\ref{xhat1r}) - (\ref{xhat3r}),
(\ref{eq:L_squarer}), (\ref{IR_r}) with $r=1$ will provide an unbiased estimate
of the unknown inputs.
We choose the first input to be a sawtooth and the second input
to be a sinusoid. Figure \ref{fig:min1a} shows the actual
unknown inputs and the estimated unknown inputs using the recursive filter
developed previously.
\begin{figure}[ht!]
\centering{
\includegraphics[scale=0.4]{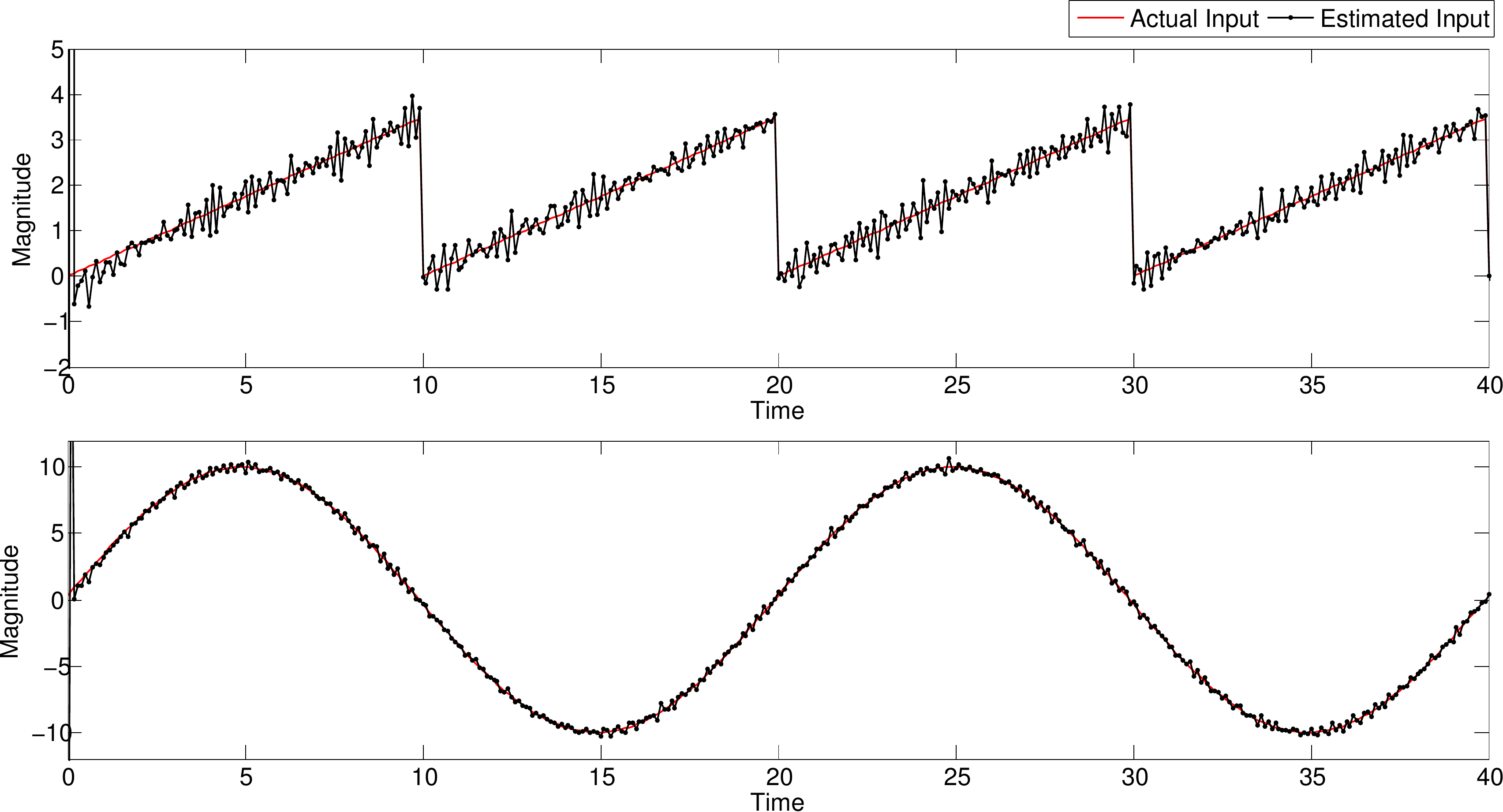}}
\caption{Actual and estimated inputs for the compartmental model example. The
filter uses a delay of one time step ($r$=1), since $CH=0$ and $CAH$ is full
rank.}
\label{fig:min1a}
\end{figure}
\begin{figure}[ht!]
\centering{
\includegraphics[width=5cm]{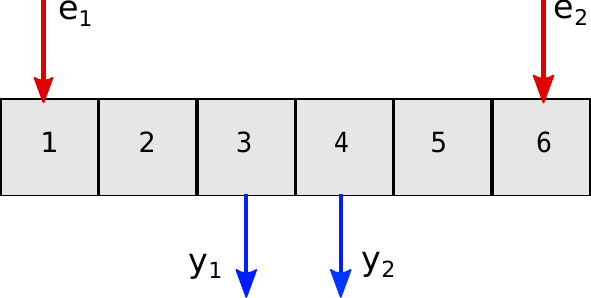}}
\caption{The inputs are provided to the first
and the last compartment while the states from the third and the fourth
compartment are measured and constitute the outputs.}
\end{figure}
The compartmental model is a convenient example to illustrate delayed input
reconstruction since by changing the compartments from which the states are
measured, the product $CA^rH$ can be altered. For instance, if the inputs are
given to the compartments 1 and 6 and the states in the compartments 3 and 4 are
measured as outputs then it follows that
\begin{align}
H = \matl{cccccc} 1 & 0 & 0 & 0 & 0 & 0\\0 & 0 & 0 & 0 & 0 & 1 \matr^\rmT,
C & =
\matl{cccccc} 0 & 0 & 1 & 0 & 0 & 0 \\
    0 & 0 & 0 & 1 & 0 & 0 \matr. \label{eq:ex_comp12}
\end{align}
Note that in this case $CH=0, CAH=0$ and $CA^2H$ is full rank. This
implies that input reconstruction is only possible using the filter
(\ref{xhat1r}) - (\ref{xhat3r}), (\ref{eq:L_squarer}), (\ref{IR_r})
with a minimum delay of two time-steps i.e. $r=2$. In this case,
results similar to those in Figure \ref{fig:min1a} are obtained, but
not shown here due to space constraints. Note that the filters in
\cite{kitanidis,steven_kitanidis,palanthACC2006} are no-delay filters
and require $CH$ to be full rank and hence cannot handle the case
where $CH=0$.
\par Next we present a numerical result
illustrating delayed input reconstruction in presence of minimum phase
zeros. Consider a state space system characterized by the following
$A,H,C$ matrices.
\begin{align}
A&= \matl{ccccc} 1.1 & -0.6 & 1\\
   0.5 & 0 & 1\\
   0 & 0.2 & 0.3\matr, 
   \quad H = \matl{ccccc} 2\\0\\0\matr,\notag \\
 C&= \matl{ccccc} 0 & 0.4 & 1\matr. \notag
\end{align}
This system has a zero at $-0.2$ (minimum phase) and the eigenvalues of
$(A-L_kCA^2)$ are $0,0,-0.2$ in accordance to Lemma \ref{lem:eig_r}. 
Figure \ref{fig:minp} shows the actual unknown input and the estimated unknown
input using the recursive delayed input reconstruction filter. 
\begin{figure}[!ht]
\centering
\includegraphics[scale=0.5]{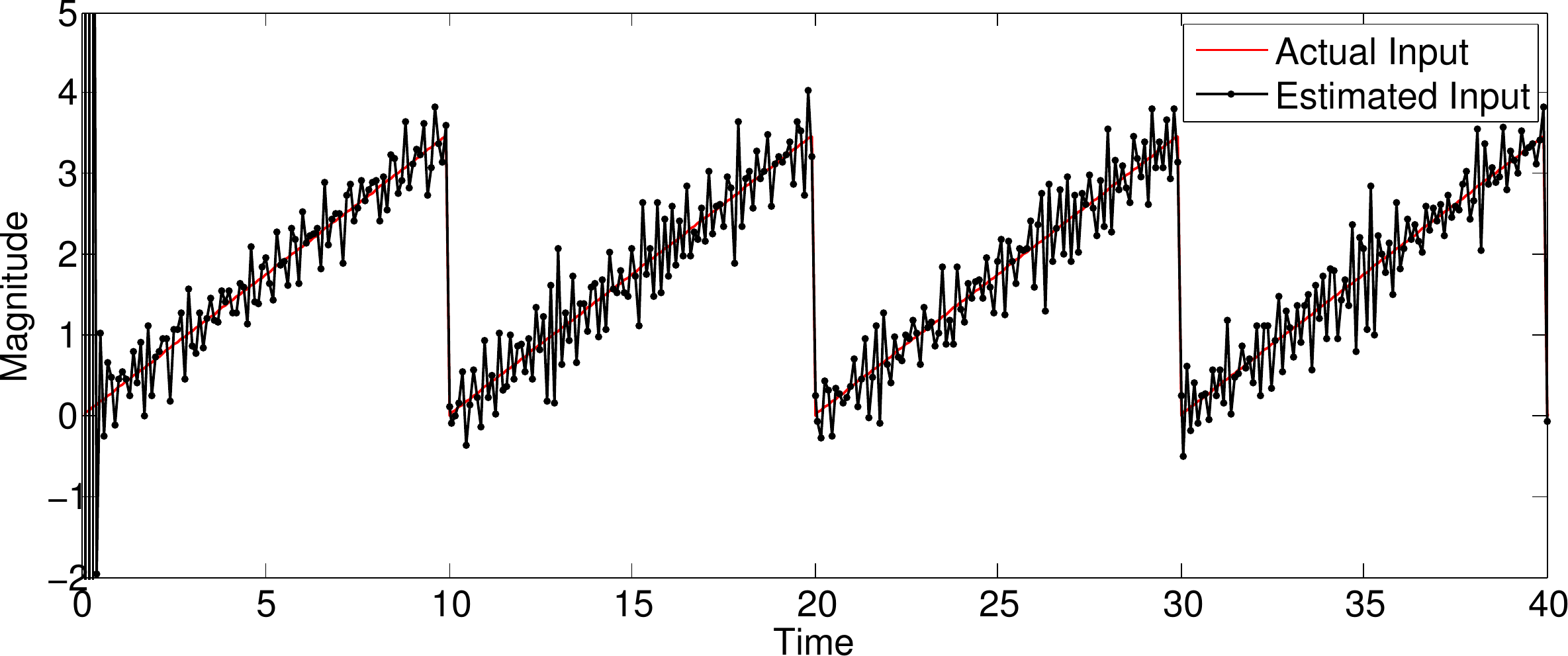}
\caption{Actual and estimated inputs for a system with minimum phase zero}
\label{fig:minp}
\end{figure}
Figure \ref{fig:ermin} shows the error in state estimation. The
computed state estimation error represents the error computed using (\ref{ers})
and substituting $r=1$ and assuming that the initial condition of the states is
known. The computation was done using the eigenvalues of $(A-L_kCA^2)$ and the
results were plotted against the actual state estimation error. The exact
overlap between the actual and the computed state estimation error consolidates
the finding that the invariant zeros of the system govern the dynamics of the
state estimation error while the decaying nature of the error is in accordance
with the Theorem \ref{thm:suff_r} with $r=1$.
\begin{figure}[!ht]
\centering
\includegraphics[width=14cm]{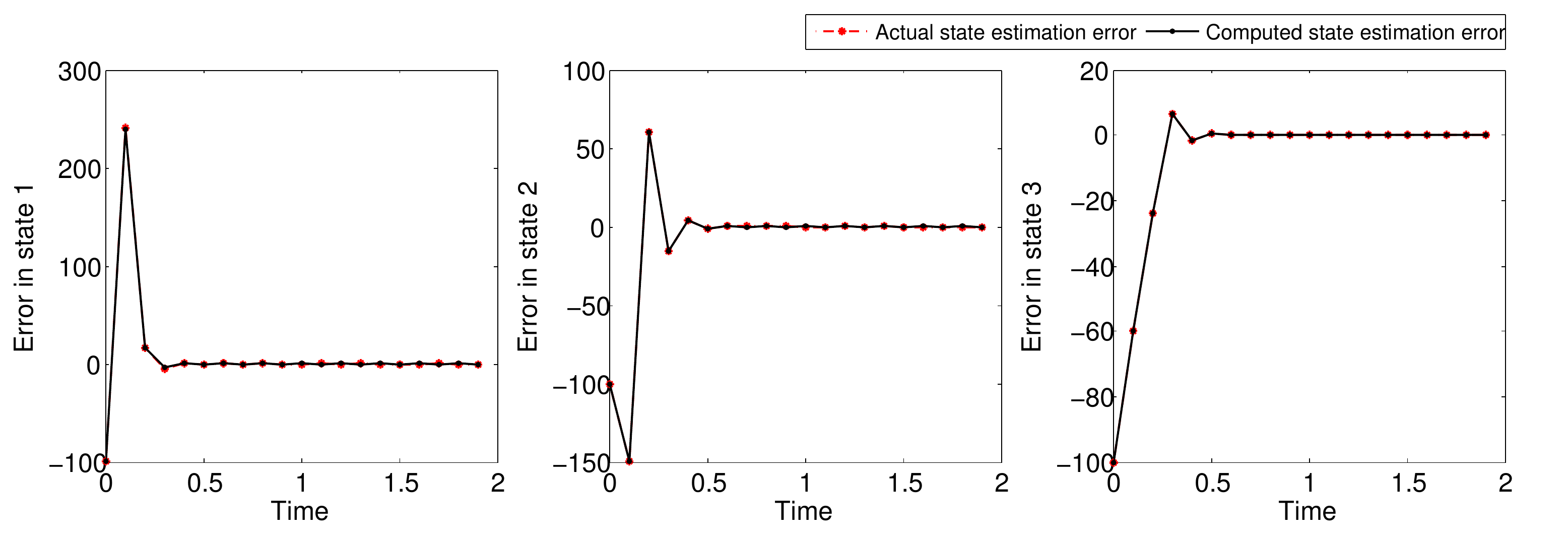}
\caption{Error in state estimation for a system with minimum phase zero}
\label{fig:ermin}
\end{figure}\\\
Now consider a state space system characterized by  the following $A,H,C$
matrices.
\begin{align}
A&= \matl{ccccc} 0.0725 & 1 & 0.2072\\
   -0.6158 & 0.0725 & 0.2339\\
   0 & 0 & -0.1449\matr, 
   \quad H = \matl{ccccc} 0\\0\\4\matr,\notag \\
 C&= \matl{ccccc} 5.005 & 0 & 0\matr. \notag
\end{align}
This system has a zero at $-1.0564$ (nonminimum phase) and the eigenvalues of
$A-L_kCA^2$ are $-1.0564,0,0$ in accordance with \ref{lem:eig_r}. 
Figure \ref{fig:nminp} shows the actual unknown input and the estimated unknown
input using the recursive delayed input reconstruction filter. 
\begin{figure}[ht!]
\centering
\includegraphics[scale=0.5]{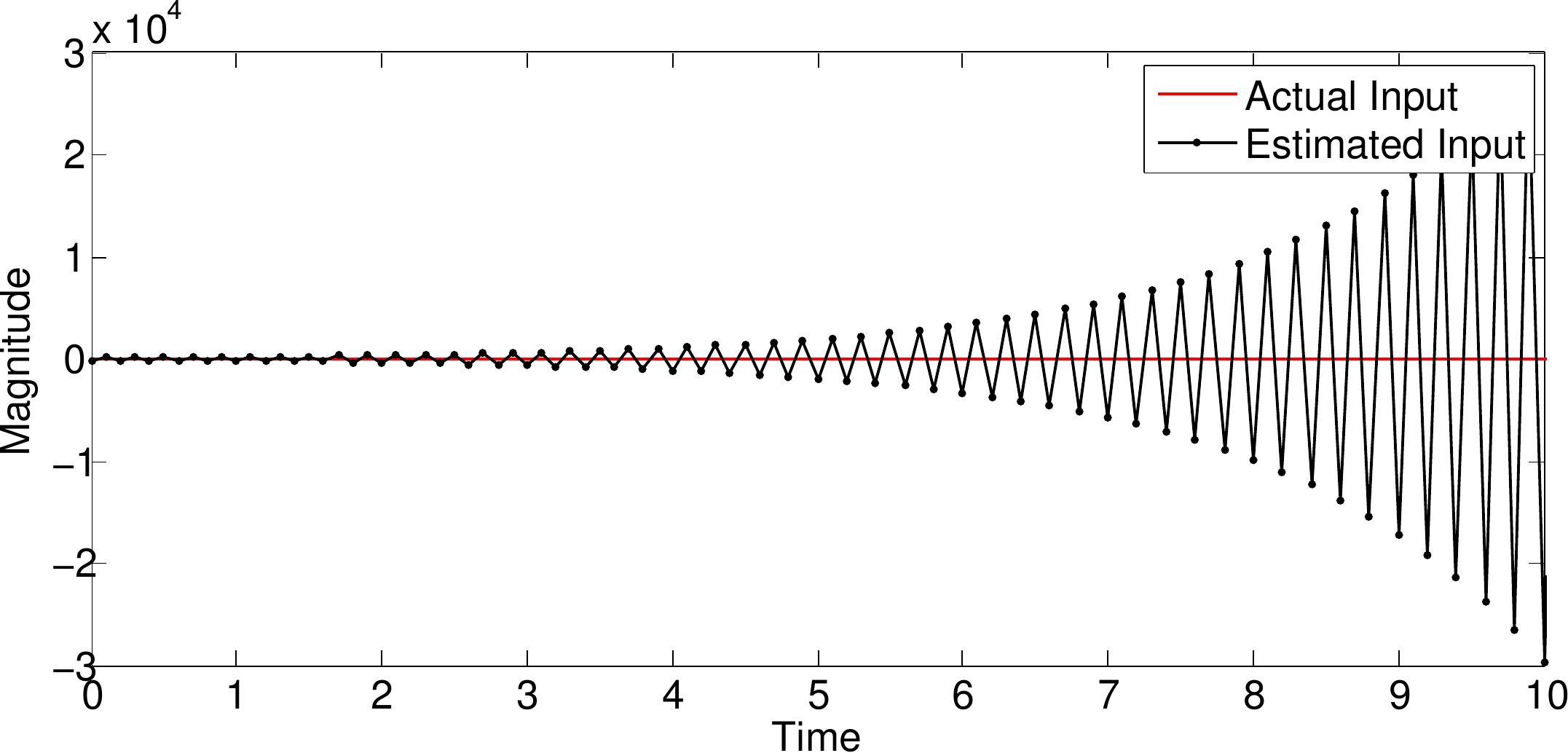}
\caption{Actual and estimated inputs for a system with nonminimum phase zero}
\label{fig:nminp}
\end{figure}
Figure \ref{fig:ernmin} shows the actual and computed error in state estimation.
\begin{figure}[ht!]
\centering
\includegraphics[width=14cm]{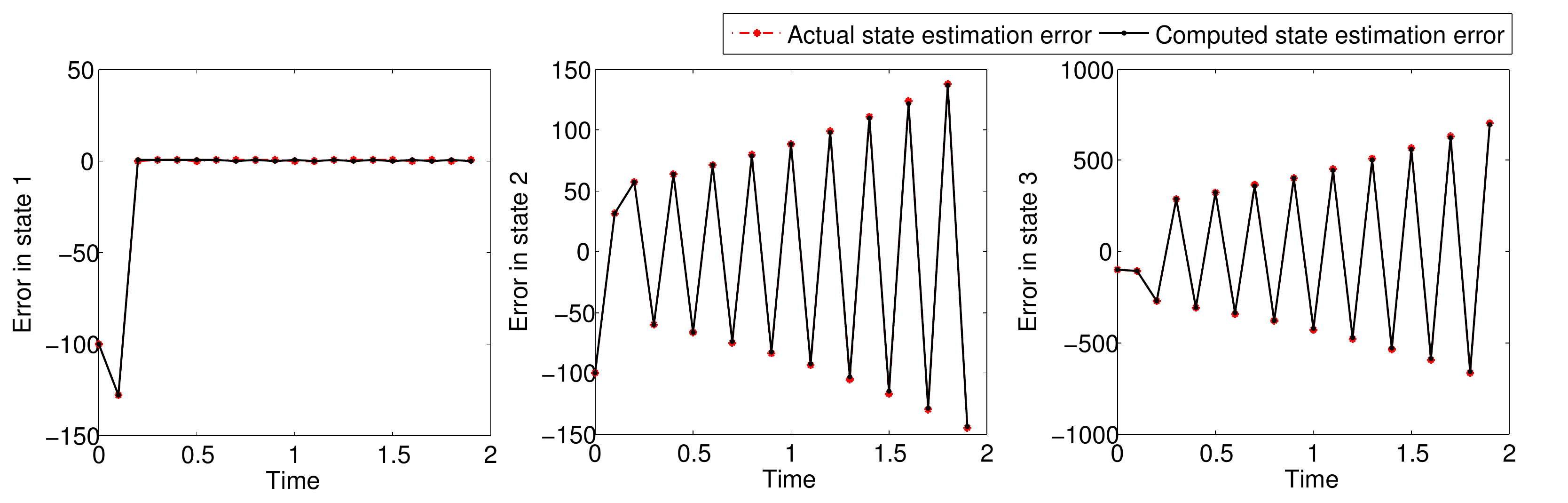}
\caption{Error in state estimation for a system with nonminimum phase zero}
\label{fig:ernmin}
\end{figure}

\subsection{Numerical Results - Non-square Systems}
State estimation and input reconstruction with a delay of one time step is
performed for the non-square state space system characterized by 
the following $A,H,C$ matrices.
\begin{align}
A&= \matl{ccccc} 0.0725 & 1 & 0.2072\\
   -0.6158 & 0.0725 & 0.2339\\
   0 & 0 & -0.1449\matr, 
   \quad H = \matl{ccccc} 0\\0\\4\matr,\notag \\
 C&= \matl{ccccc} 5.005 & 0 & 0\\0&0.1&0\matr. \notag
\end{align}. The minimum variance gain for the estimator is computed
using Corollary \ref{lcor}. Figure \ref{fig:nonsqi} shows the actual
unknown input and the estimated unknown input using the recursive
delayed input reconstruction filter. 
\begin{figure}[ht!]
\centering
\includegraphics[scale=0.5]{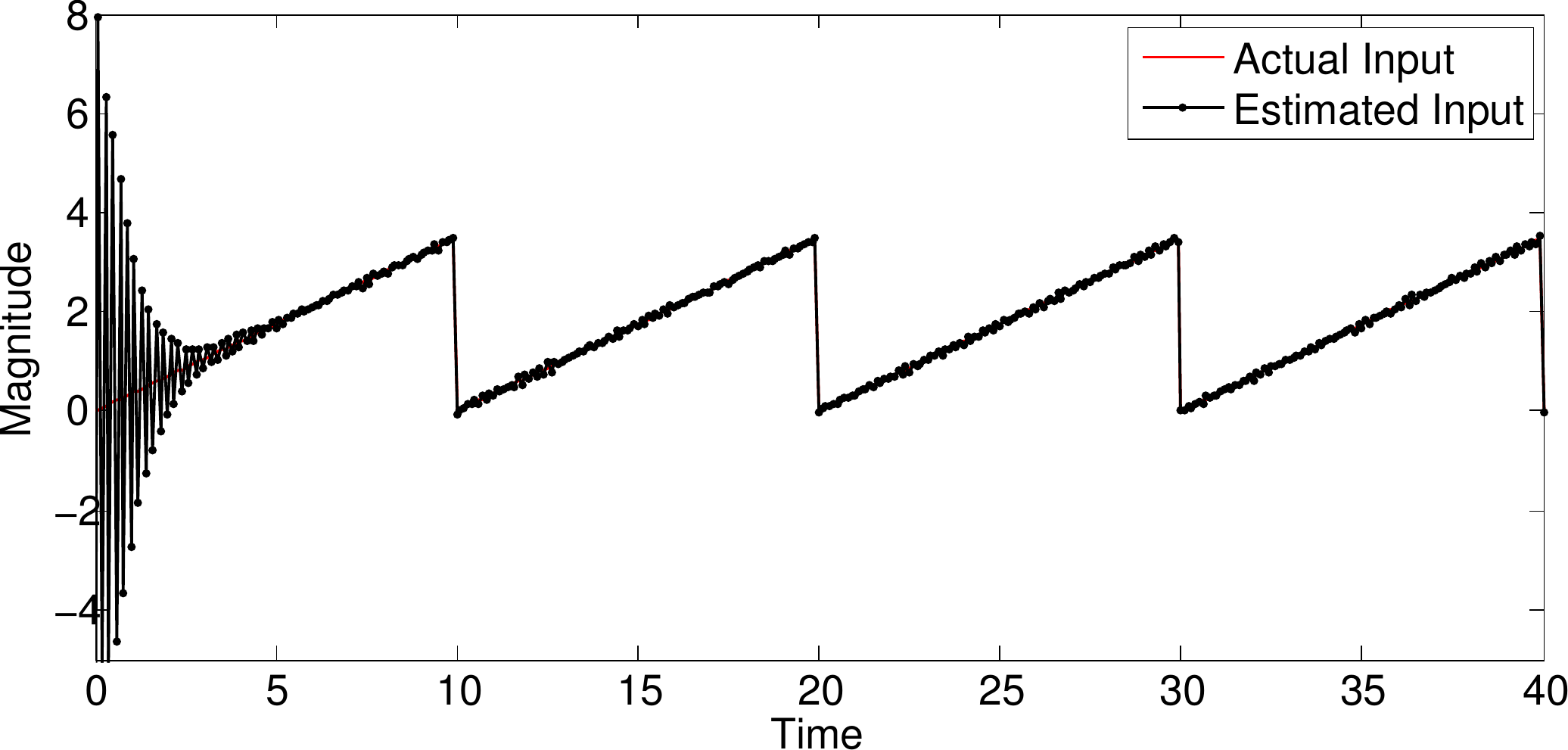}
\caption{Input reconstruction for a non-square system}
\label{fig:nonsqi}
\end{figure}
It should be noted that the convergence of the filter in this case is
asymptotic. In the next section we highlight the relationship between the
filter developed in this paper and the filters developed in
\cite{kitanidis,steven_kitanidis,palanthACC2006}.

\section{Relationship with Unbiased Minimum Variance Filters With No Delay}

In this subsection, within the context of square systems ($l=p$), we show that
the filters developed in \cite{kitanidis,steven_kitanidis,palanthACC2006} are
special cases of the filter (\ref{xhat1r}) - (\ref{xhat3r}), (\ref{eq:L_squarer}),
(\ref{IR_r})  with $r=0$.

\begin{prop} Let $l=p$, then the unbiased minimum variance filters in
\cite[(3) - (6) along with (13) and (20)]{steven_kitanidis} and \cite{kitanidis}
are equivalent to the filter (\ref{xhat1r}) - (\ref{xhat3r}),
(\ref{eq:L_squarer}), (\ref{IR_r}) with $r=0$.
\end{prop}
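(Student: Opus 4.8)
The plan is to specialize the delayed filter (\ref{xhat1r})--(\ref{xhat3r}), (\ref{eq:L_squarer}), (\ref{IR_r}) to $r=0$ and then verify, recursion by recursion, that in the square case it reproduces the equations of \cite{kitanidis,steven_kitanidis,palanthACC2006}. First I would record what $r=0$ does to the unbiasedness constraint (\ref{cond_r}): the chain collapses to the single condition $H-L_kCH=0$, i.e.\ $L_kCH=H$, which is exactly the unbiasedness constraint imposed in those references. Since $l=p$, Corollary \ref{cor_r}$(ii)$ at $r=0$ gives $\rank(CH)=p$, so $CH$ is square and invertible and, by Lemma \ref{lem:L_r} at $r=0$, $L_k=H(CH)^{-1}$ is the \emph{unique} matrix satisfying the constraint. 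Consequently the constrained minimum-variance gains of \cite{kitanidis,steven_kitanidis,palanthACC2006}, whatever weighting they employ, are forced to equal $H(CH)^{-1}$; in particular the formula of Theorem \ref{Lthm} at $r=0$ can be checked to reduce to $L_kCH=H$ and hence to $L_k=H(CH)^{-1}$. So all the filters share the gain $L_k=H(CH)^{-1}$.

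Next I would match the state recursion. At $r=0$, (\ref{xhat3r}) reads $\hat x_{k\mid k-1}=A\hat x_{k-1\mid k-1}$, (\ref{xhat1r}) reads $\hat x_{k\mid k}=\hat x_{k\mid k-1}+H(CH)^{-1}(y_k-C\hat x_{k\mid k-1})$, and (\ref{IR_r}) reads $\hat e_{k-1}=(CH)^{-1}(y_k-C\hat x_{k\mid k-1})$. The cited filters instead carry out the measurement update in two stages --- an input-estimation step $\hat e_{k-1}=M_k(y_k-C\hat x_{k\mid k-1})$, followed by a Kalman-type state correction of the form $\hat x_{k\mid k}=\hat x_{k\mid k-1}+H\hat e_{k-1}+K_k(y_k-C\hat x_{k\mid k-1}-CH\hat e_{k-1})$. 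The key observation is that when $l=p$ any (possibly weighted) pseudoinverse of the square invertible matrix $CH$ equals $(CH)^{-1}$, so $M_k=(CH)^{-1}$, the residual $y_k-C\hat x_{k\mid k-1}-CH\hat e_{k-1}$ vanishes identically, and equivalently the Kalman gain $K_k$ itself degenerates to $0$ because it carries a factor $I-CH\,M_k=0$. Hence the two-stage update collapses to $\hat x_{k\mid k}=\hat x_{k\mid k-1}+H\hat e_{k-1}=\hat x_{k\mid k-1}+H(CH)^{-1}(y_k-C\hat x_{k\mid k-1})$, which is precisely (\ref{xhat1r}) at $r=0$, while the input estimate is exactly (\ref{IR_r}) at $r=0$.

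Finally I would match the covariance bookkeeping, so that ``equivalent'' holds for second moments and not only point estimates. Fact \ref{fact:Pgd} at $r=0$ gives $P_{k\mid k}=(A-L_kCA)P_{k-1\mid k-1}(A-L_kCA)^{\rmT}+(I-L_kC)Q_{k-1}(I-L_kC)^{\rmT}+L_kR_kL_k^{\rmT}$ with $L_k=H(CH)^{-1}$, and the error recursion (\ref{eq:error_r}) at $r=0$ is driven by exactly this gain. Since the cited filters propagate the state-, input-, and cross-covariances with the same $L_k$, expanding their recursions and using $CH\,M_k=I$ to cancel the coupling terms should reproduce this expression (together with the matching formula for $\mathbb{E}[\hat e_{k-1}\hat e_{k-1}^{\rmT}]$). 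I expect the main obstacle to be purely notational: \cite{steven_kitanidis,kitanidis,palanthACC2006} write the filter in a two-stage, multi-covariance form with an auxiliary Kalman gain, whereas (\ref{xhat1r})--(\ref{xhat3r}) is a single-stage update, so the work lies in unfolding equations (3)--(6), (13) and (20) of \cite{steven_kitanidis} (and the analogous equations of the other two references), substituting $l=p$, and showing every auxiliary term carrying the factor $I-CH\,M_k$ disappears --- which the uniqueness of the unbiased gain when $CH$ is square makes automatic.
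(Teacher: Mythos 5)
Your proposal is correct and follows essentially the same route as the paper: the paper's (very terse) proof likewise rests on noting that $CH$ is invertible when $l=p$, so that $L_kCH=H$ forces the unique gain $L_k=H(CH)^{-1}$, after which the cited two-stage filters collapse to (\ref{xhat1r})--(\ref{xhat3r}), (\ref{IR_r}) by direct substitution. Your write-up simply fills in the substitution details (the vanishing residual and degenerate Kalman gain) that the paper dismisses as ``straight-forward simplification.''
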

{\bf Proof.} The proof follows by first noting that in the case $l=p$, $CH$
($F_k$ in \cite{steven_kitanidis}) is invertible, and then following
straight-forward substitution and simplification. \qed

Note that \cite{steven_kitanidis,kitanidis} do not prove convergence of the
respective filters, therefore the above result proves convergence of the filter
through Theorem \ref{thm:suff_r} with $r=0$ by connecting the convergence with
zeros. Furthermore, note that in the case $l=p$ with $r=0$, from Theorem
\ref{thm:nec}, the filter gain must satisfy the condition
\begin{align}
LCH = H, \label{const_0}
\end{align}
and since rank $H=p$ and consequently rank $CH=p$, $L = H(CH)^{-1}$ is the only
possible $L$ that satisfies (\ref{const_0}) and hence there is a unique
solution and no concept of a minimum variance solution exists.

\section{Remarks}
While the results in this paper indicate that if the system has
nonminimum phase zeros or zeros on the unit-circle, the filter is not
convergent, it is worthwhile to note however that there may be other
approaches that may work in such cases as is being explored in
\cite{amatoACC2013}, \cite{hpmECC2014}. We also note here that for
square systems having no/minimum phase zeros is a sufficient
condition for unbiasedness/asymptotic unbiasedness. In non-square
this is not the case as the example in section \ref{non-sqr}
illustrates. Note that no-delay filters in
\cite{kitanidis,steven_kitanidis,palanthACC2006} would only be
applicable if $CH$ is full rank. So in the context of the numerical
example of a compartmental model in section \ref{comp-example}, they
would be applicable only if the the output measurements were the
states of the compartments that the unknown inputs were entering (in
this case compartments 1 and 6). Finally, in the presence of
additional known inputs $u_k$, note that the same filter equations and
theory apply with the modified equations
\begin{align}
\hat{x}_{k-r\mid k} & = \hat{x}_{k-r\mid k-1}+L_k(y_{k}-C\hat{x}_{k\mid
k-1}-Du_{k}), \\
     \hat{x}_{k-r\mid k-1} &= A\hat{x}_{k-r-1\mid k-1} + Bu_{k-r-1}, \\
     \hat{x}_{k\mid k-1} &= A^{r+1}\hat{x}_{k-r-1\mid k-1} + A^rBu_{k-r-1} +
A^{r-1}Bu_{k-r} + \cdots \nonumber \\& + Bu_{k-1} \\
\hat e_{k-r-1} & \isdef (CH)^{-1}(y_{k} - C\hat x_{k|k-1} - Du_k).
\end{align}
instead of (\ref{xhat1r}) - (\ref{xhat3r}).  It is to be noted that one drawback of the input
reconstruction method is that it cannot differentiate between unknown
noise and unknown input.

\section{Conclusions}
In this paper, we developed a technique that recursively use current
measurements to estimate past states and reconstruct past
inputs. Furthermore, we derived convergence results for the filters
developed and established its relationship with invariant zeros of the
system.  Thus we developed a broader class of filters (than
traditional unbiased minimum-variance filters), provided necessary and
sufficient conditions for the filter to provide unbiased estimates.
we also established that the unbiased-minimum variance filters in
\cite{kitanidis,steven_kitanidis,palanthACC2006,palanthUMVACC2007} are
special cases of the filter developed in this note and provided
numerical examples illustrating the key difference between the proposed
filter and existing methods. The key results are listed below
\begingroup
    \fontsize{12pt}{12pt}\selectfont
\begin{enumerate}
\item Necessary conditions for unbiasedness of the filter.
\item Sufficient conditions for convergence of the filter for square
  systems.
\item Showing that the sufficient conditions for convergence of the
  filter for square systems do not hold for non-square systems.
\item Establishing an upper bound for the filter delay and deriving existence
  conditions for the filter.
\end{enumerate} 
\endgroup
 Future work will focus on an in depth analysis of
 convergence in non-square systems and its relationship with the invariant parameters of the system.  
\footnotesize
\bibliographystyle{ieeetr}
\bibliography{DNA_refs}
\end{document}